\newtheorem{theorem}{Theorem}[section]
\newtheorem{prop}[theorem]{Proposition}
\newtheorem{theo}[theorem]{Theorem}
\theoremstyle{definition}
\newtheorem{defi}[theorem]{Definition}
\newtheorem{coro}[theorem]{Corollary}
\theoremstyle{remark}
\newtheorem{remark}[theorem]{Remark}
\numberwithin{equation}{section}
\def\cal{\mathcal}
\def\om{\omega}
\def\om{\omega}
\newfont{\df}{eufm10}
\def\vep{\varepsilon}
\def\ot{\otimes}
\def\ot{\otimes}
\def\ra{\rangle}
\def\la{\langle}
\def\vn{\varepsilon}
\def\om{\omega}
\def\ot{\otimes}
\def\ra{\rangle}
\def\la{\langle}
\begin{document}

\title[On finite-dimensional representations of $U_{r,s}(\widehat{\mathfrak{g}})$]
    {On finite-dimensional representations of two-parameter quantum affine algebras  
    }

\author[Jing]{Naihuan Jing}
\address{Department of Mathematics,
   North Carolina State University,
   Ra\-leigh, NC 27695-8205, USA}
\email{jing@math.ncsu.edu}

\author[Zhang]{Honglian Zhang$^\star$}
\address{Department of Mathematics, Shanghai University,
Shanghai 200444, China} \email{hlzhangmath@shu.edu.cn}

\thanks{$^\star$ H.Zhang, Corresponding Author}

\subjclass{Primary 17B37, 81R50; Secondary 17B35}

\keywords{Two-parameter quantum affine algebra,  Drinfeld
realization, evaluation representation, finite-dimensional
representation. }
\begin{abstract}
We introduce the Drinfeld polynomial for each weight of
two-parameter quantum affine algebras and establish a one-to-one
correspondence between finite irreducible representations and sets
of $l$-tuples of pairs of polynomials with certain conditions.
\end{abstract}

\maketitle

\section{ Introduction}
\smallskip

Quantum groups were introduced independently by V. G. Drinfeld and M. Jimbo
in 1985 using Chevalley generators and Serre relations. A few years later, Drinfeld
\cite{D} found the second definition of quantum affine algebras and
Yangians in terms of root vectors and introduced certain polynomials
to characterize finite dimensional irreducible representations of Yangians. He showed that
the Yangian representations are finite dimensional if and only
if their Drinfeld polynomials are of certain form. Later Chari and
Pressley \cite{CP1}-\cite{CP3} generalized the notion of Drinfeld
polynomials to quantum affine algebras and proved similar results
for finite dimensional representations of both untwisted and twisted
quantum enveloping algebras. Drinfeld polynomials are also related to
Frenkel-Rechetikhin characters \cite{FR} of finite dimensional irreducible representations.

Two-parameter quantum enveloping algebras are generalization of (one-para\-meter)
quantum enveloping algebras, originally introduced as generalization of Hopf algebras \cite{T, J, Do}
and have close connections with Yang-Baxter equations \cite{R}. They were first defined for finite types
using specific forms of Cartan matrices \cite{BW1, BW2} \cite{BGH1, BGH2} \cite{BH, HS}, and later
the affine cases were introduced by \cite{HRZ} and the toroidal cases by the authors
\cite{JZ1} in the context of McKay correspondence. As in the one-parameter cases
various combinatorial realizations for
2-parameter quantum affine algebras were given in \cite{JZ1, JZ2}, where the meaning of the second parameter
was explained.

Motivated by Drinfeld's and Chari-Pressley's work, we study finite dimensional representations of
two-parameter quantum affine algebras in this paper. We introduce
Drinfeld polynomials for each finite dimensional irreducible representations and show
that they are again characterized by Drinfeld polynomials. Although the theory is much expected,
there are some new features in
two-parameter situation. The most notable one is that there are in
fact a pair of related two-parameter Drinfeld polynomials for each
finite dimensional representation instead of one for the positive
root vector and negative root vector. For completeness,
we provide all proofs in
the case of $\hat{\mathfrak{sl}_2}$, in particular we elaborate more for
the cases when two-parameter quantum enveloping algebras have
distinct features. Just as in the usual case, the two-parameter cases are essentially
in one-to-one correspondence with the one-parameter quantum groups.

This article is organized as follows. After a quick introduction of
two-parameter quantum affine algebras in Section 2, we define the
category of finite dimensional representations of 2-parameter
quantum enveloping algebras in Section 3 and introduce the notion of
Drinfeld polynomials. A general result is shown as in the
one-parameter case. In Section 4 we study the
case of $U_{r, s}(\widehat{\mathfrak{sl}}_2)$ in details and prove the existence of
a pair of Drinfeld polynomials for each finite dimensional representation.
Finally in Section 5 we discuss some specializations of two parameters.

\smallskip
\section{Two-parameter quantum affine algebras}

\noindent {\bf 2.1} \, In this subsection, we recall the definition
of two-parameter quantum
 algebras $U_{r,s}(\widehat{\mathfrak{g}})$, developed in \cite{HRZ},
 \cite{HZ1} and \cite{HZ2}. Here we let $\widehat{\mathfrak{g}}$ be an
 untwisted affine Lie algebra.

Let $\mathbb{K}=\mathbb{Q}(r,s)$ be a field of rational functions
with two indeterminates $r , s$. Let $\Phi$ be the finite root system
of $\mathfrak{g}$ with $\Pi$, a base of simple roots, which is a
subset of a Euclidean space ${\mathbb{R}}^n$ with an inner product
$(\, , \,)$. Let $\epsilon_1,\epsilon_2,\cdots,\epsilon_n$ denote an
orthonormal basis of ${\mathbb{R}}^n$. Let $\delta$ denote the
primitive imaginary root  of the affine Lie algebra $\hat{\mathfrak
g}$, and let $\theta$ be the highest root of the simple Lie algebra
${\mathfrak g}$. Define $\alpha_0=\delta-\theta$ , ~then
$\Pi'=\{\alpha_i\mid i\in I_0\}$ is a basis of simple roots of the
affine Lie algebra $\hat{\mathfrak g}$

We recall that the quantum number in two parameters is defined
by
\begin{equation*}
[n]=\frac{r^n-s^n}{r-s}=r^{n-1}+r^{n-2}s+\cdots+rs^{n-2}+s^{n-1},
\end{equation*}
and the Guassian numbers are defined similarly.

\begin{defi}(\cite{HRZ},\,\cite{HZ1},\,\cite{HZ2})\,   {\bf Two-parameter quantum affine algebra
$U_{r,s}(\widehat{\mathfrak{g}})$}  is the unital associative algebra
over $\mathbb{K}$ generated by the elements $e_j,\, f_j,\,
\omega_j^{\pm 1},\, \omega_j'^{\,\pm 1}\, (j\in I_0=\{0,\,\cdots ,
n\}),\, \gamma^{\pm\frac{1}{2}},\, {\gamma'}^{\pm\frac{1}{2}}$, satisfying the following relations:

\noindent $(\hat{R}1)$ $\gamma^{\pm\frac{1}2},\,\gamma'^{\pm\frac{1}2}$ are central with
 $\gamma\gamma'=(rs)^c$ such
that $\omega_i\,\omega_i^{-1}=\omega_i'\,\omega_i'^{\,-1}=1$, and
\begin{equation*}
\begin{split}[\,\omega_i^{\pm 1},\omega_j^{\,\pm 1}\,]=[\,\omega_i^{\pm 1},\omega_j'^{\,\pm 1}\,]=[\,\omega_i'^{\pm
1},\omega_j'^{\,\pm 1}\,]=0.
\end{split}
\end{equation*}
$(\hat{R}2)$ \ For $\,i,\, j\in I_0$,
\begin{equation*}
\begin{array}{ll}
&\omega_j\,e_i\,\omega_j^{\,-1}=\langle i, j\rangle
\,e_i,\qquad\quad \omega_j\,f_i\,\omega_j^{\,-1}=\langle i, j\rangle^{-1}\,f_i,\\
&\omega'_j\,e_i\,\omega'^{\,-1}_j=\langle j, i\rangle^{-1} \,e_i,
\qquad\ \ \omega'_j\,f_i\,\omega'^{\,-1}_j=\langle j, i\rangle\,f_i,
\end{array}
\end{equation*}\\
$(\hat{R}3)$ \ For $\,i,\, j\in I_0$, we have
 $$[\,e_i, f_j\,]=\frac{\delta_{ij}}{r-s}(\omega_i-\omega'_i).$$
$(\hat{R}4)$  For any $i\ne j$, we have the $(r,s)$-Serre relations:
\begin{gather*}
\bigl(\text{ad}_l\,e_i\bigr)^{1-a_{ij}}\,(e_j)=0,\\
\bigl(\text{ad}_r\,f_i\bigr)^{1-a_{ij}}\,(f_j)=0,
\end{gather*}
where the definitions of the left-adjoint action $\text{ad}_l\,e_i$
and the right-adjoint action $\text{ad}_r\,f_i$ are given in the
following sense:
$$
\text{ad}_{ l}\,a\,(b)=\sum_{(a)}a_{(1)}\,b\,S(a_{(2)}), \quad
\text{ad}_{ r}\,a\,(b)=\sum_{(a)}S(a_{(1)})\,b\,a_{(2)}, \quad
\forall\; a, b\in U_{r,s}(\hat{\mathfrak g}),
$$
where $\Delta(a)=\sum_{(a)}a_{(1)}\ot a_{(2)}$ is given by
proposition \ref{hopf} below,
\smallskip
and the structural constant $\langle i,\, j\rangle$ is the $(i,
j)$-entry of the two-parameter quantum Cartan matrix $J$, given as
follows respectively.

For type $\mathrm{A}_{n}^{(1)}$ with $n>1$,

$$J=\left(\begin{array}{cccccc}
rs^{-1}& r^{-1}& 1 & \cdots & 1 & s \\
s & rs^{-1} & r^{-1}  & \cdots & 1 & 1\\
\cdots &\cdots &\cdots & \cdots & \cdots & \cdots\\
1 & 1 & 1  & \cdots & rs^{-1} & r^{-1}\\
 r^{-1} & 1 & 1 & \cdots & s & rs^{-1}
\end{array}\right)$$

 and  $n=1$,
$$J=\left(\begin{array}{cc}
rs^{-1}& r^{-1}s \\
r^{-1}s & rs^{-1}
\end{array}\right)$$

For type $\mathrm{B}_{n}^{(1)}$.
$$J=\left(\begin{array}{cccccc}
rs^{-1}& (rs)^{-1}& r^{-1} & \cdots & 1 & rs \\
rs & rs^{-1} & r^{-1}  & \cdots & 1 & 1\\
\cdots &\cdots &\cdots & \cdots & \cdots & \cdots\\
1 & 1 & 1  & \cdots & rs^{-1} & r^{-1}\\
 (rs)^{-1} & 1 & 1 & \cdots & s & r^{\frac{1}{2}}s^{-\frac{1}{2}}
\end{array}\right)$$

For type $\mathrm{C}_{n}^{(1)}$.
$$J=\left(\begin{array}{cccccc}
rs^{-1}& r^{-1}& 1 & \cdots & 1 & rs \\
s & r^{\frac{1}{2}}s^{-\frac{1}{2}} & r^{-\frac{1}{2}}  & \cdots & 1 & 1\\
\cdots &\cdots &\cdots & \cdots & \cdots & \cdots\\
1 & 1 & 1  & \cdots & r^{\frac{1}{2}}s^{-\frac{1}{2}} & r^{-1}\\
 (rs)^{-1} & 1 & 1 & \cdots & s & rs^{-1}
\end{array}\right)$$
\end{defi}

For type $\mathrm{D}_{n}^{(1)}$.
$$J=\left(\begin{array}{cccccc}
rs^{-1}& (rs)^{-1}& r^{-1} & \cdots & 1 & (rs)^2 \\
rs & rs^{-1} & r^{-1}  & \cdots & 1 & 1\\
\cdots &\cdots &\cdots & \cdots & \cdots & \cdots\\
1 & 1 & 1  & \cdots & rs^{-1} & (rs)^{-1}\\
 (rs)^{-2} & 1 & 1 & \cdots & rs & rs^{-1}
\end{array}\right)$$

For type  $\mathrm{E}_{6}^{(1)}$ $$J=\left(\begin{array}{ccccccc}
rs^{-1}& (rs)^{-1}& r^{-2}s^{-1} &(rs)^{-1} & rs & rs & rs\\
rs & rs^{-1} & 1 & r^{-1} & 1 & 1 & 1\\
rs^{2} &1 &rs^{-1} & 1 & r^{-1} &1 &1 \\
rs & s & 1 & rs^{-1} & r^{-1} & 1 & 1\\
(rs)^{-1}& 1 & s & s& rs^{-1}& r^{-1}& 1\\
(rs)^{-1}& 1 &1 & 1& s &rs^{-1} & r^{-1} \\
 (rs)^{-1} & 1 & 1 &1 & 1 & s &rs^{-1}
\end{array}\right)$$

For type  $\mathrm{F}_{4}^{(1)}$,$$J=\left(\begin{array}{ccccc}
rs^{-1}& r^{-2}s^{-1}& (rs)^{-1} &rs &  rs\\
rs^2 & rs^{-1} & r^{-1}  & 1 & 1\\
rs &s &rs^{-1} &r^{-1} &1 \\
(rs)^{-1} & 1 & s  & r^{\frac{1}{2}}s^{-\frac{1}{2}} & r^{-\frac{1}{2}}\\
 (rs)^{-1} & 1 & 1 & s^{\frac{1}{2}} & r^{\frac{1}{2}}s^{-\frac{1}{2}}
\end{array}\right)$$

For type $\mathrm{G}_{2}^{(1)}$,
$$J=\left(\begin{array}{cccccc}
rs^{-1}& r^{-2}s^{-1}& rs \\
rs^2 & rs^{-1} & r^{-1}  \\
(rs)^{-1} & s & r^{\frac{1}{3}}s^{-\frac{1}{3}}
\end{array}\right)$$

\medskip


\begin{remark}\, In \cite{JZ1, JZ2} the Fock space realization of two-parametr
quantum affine algebras of types $A$ and $C$ were given in terms of
Young tableaux. The combinatorial model shows that the parameters $r$ and $s$ correspond
roughly the scalars in the insertion and removing operators.
The following fact is straightforward.
\end{remark}

\begin{prop}(\cite{HRZ},\,\cite{HZ1},\,\cite{HZ2})\, \label{hopf}  Two-parameter quantum affine algebra $U=U_{r,s}(\hat{\mathfrak g})$ is a Hopf algebra with
the coproduct $\Delta$, the counit $\vep$ and the antipode $S$
defined below: for $i\in I_0$, we have
\begin{gather*}
\Delta(\om_i^{\pm1})=\om_i^{\pm1}\ot\om_i^{\pm1}, \qquad
\Delta({\om_i'}^{\pm1})={\om_i'}^{\pm1}\ot{\om_i'}^{\pm1},\\
\Delta(e_i)=e_i\ot 1+\om_i\ot e_i, \qquad \Delta(f_i)=1\ot
f_i+f_i\ot \om_i',\\
\Delta(\gamma^{\,\pm\frac{1}2})=\gamma^{\,\pm\frac{1}2}\otimes
\gamma^{\,\pm\frac{1}2},\qquad \Delta(\gamma'^{\,\pm\frac{1}2})=\gamma'^{\,\pm\frac{1}2}\otimes
\gamma'^{\,\pm\frac{1}2},\\
\vn(\om_i^{\pm})=\vn({\om_i'}^{\pm1})=1, \qquad
\vn(e_i)=\vn(f_i)=0,\\
\varepsilon(\gamma^{\pm\frac{1}2})
=\varepsilon(\gamma'^{\,\pm\frac{1}2})=1,\\
S(\gamma^{\pm\frac{1}2})=\gamma^{\mp\frac{1}2},\quad
S(\gamma'^{\pm\frac{1}2})=\gamma'^{\mp\frac{1}2},\\
S(\om_i^{\pm1})=\om_i^{\mp1}, \qquad
S({\om_i'}^{\pm1})={\om_i'}^{\mp1},\\
S(e_i)=-\om_i^{-1}e_i,\qquad S(f_i)=-f_i\,{\om_i'}^{-1}.
\end{gather*}
\end{prop}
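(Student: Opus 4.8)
The plan is to realize $\Delta$ and $\vep$ as the unique $\mathbb K$-algebra homomorphisms $U\to U\ot U$ and $U\to\mathbb K$ taking the prescribed values on the generators, to realize $S$ as the unique algebra anti-homomorphism with the prescribed values, and then to verify coassociativity, the counit axioms and the antipode axioms. The content of the first step is that the assignments on generators respect the defining relations $(\hat R1)$--$(\hat R4)$; the content of the last step is a check on generators, which propagates to all of $U$ because both sides of each axiom are (anti-)homomorphisms, or compositions of such, that agree on generators.

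For $(\hat R1)$ everything is immediate, since $\gamma^{\pm\frac12},\gamma'^{\pm\frac12},\om_i^{\pm1},\om_i'^{\pm1}$ are grouplike: $\Delta$ carries central, mutually commuting elements to central, mutually commuting elements, $\vep$ sends each to $1$, and $S$ sends each to its inverse. For $(\hat R2)$ one computes $\Delta(\om_j)\Delta(e_i)\Delta(\om_j)^{-1}=(\om_j\ot\om_j)(e_i\ot1+\om_i\ot e_i)(\om_j^{-1}\ot\om_j^{-1})$ and uses $\om_j e_i\om_j^{-1}=\la i,j\ra e_i$ on each tensor leg to obtain $\la i,j\ra\Delta(e_i)$; the three remaining identities of $(\hat R2)$ are the same computation. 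For $(\hat R3)$ one expands $[\Delta(e_i),\Delta(f_j)]$ into four brackets: the term coming from $\om_i f_j\ot e_i\om_j'$ against $f_j\om_i\ot\om_j'e_i$ cancels after one application of $(\hat R2)$ on each leg, and the surviving terms sum to $\frac{\delta_{ij}}{r-s}(\om_i-\om_i')\ot\om_j'+\om_i\ot\frac{\delta_{ij}}{r-s}(\om_i-\om_i')$, which is $0$ when $i\ne j$ and equals $\frac1{r-s}(\om_i\ot\om_i-\om_i'\ot\om_i')=\Delta\!\big(\frac1{r-s}(\om_i-\om_i')\big)$ when $i=j$. The analogous, shorter checks show $\vep$ and $S$ kill $(\hat R2)$ and $(\hat R3)$; for $S$ one applies it to the relations written in the opposite algebra, using $S(e_i)=-\om_i^{-1}e_i$, $S(f_i)=-f_i\om_i'^{-1}$, $S(\om_i^{\pm1})=\om_i^{\mp1}$.

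The essential point, and the main obstacle, is compatibility with the $(r,s)$-Serre relations $(\hat R4)$. The key is the structural identity relating the coproduct to the one-sided adjoint actions: since $\Delta(e_i)=e_i\ot1+\om_i\ot e_i$ with $\om_i$ grouplike, the left adjoint action obeys the twisted Leibniz rule $\mathrm{ad}_l e_i(xy)=(\mathrm{ad}_l e_i\,x)\,y+(\om_i x\om_i^{-1})(\mathrm{ad}_l e_i\,y)$, and on a weight vector $\om_i(\,\cdot\,)\om_i^{-1}$ scales by a monomial in $r,s$ read off from $J$; dually $\mathrm{ad}_r f_i$ obeys the mirror rule. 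Iterating this $1-a_{ij}$ times turns $(\mathrm{ad}_l e_i)^{1-a_{ij}}(e_j)$ into a quantum-binomial telescoping sum whose vanishing is equivalent to the numerical identities satisfied by the entries of the matrices $J$ listed in the Definition; and because the relation and its coproduct involve only $e_i,f_i,\om_i^{\pm1},\om_i'^{\pm1},e_j,f_j,\om_j^{\pm1},\om_j'^{\pm1}$, it suffices to carry this out inside each rank-two sub-datum indexed by a pair $i\ne j$. This reduces $(\hat R4)$ to the finite computations already performed for the corresponding two-parameter algebras in \cite{HRZ, HZ1, HZ2}; the same reasoning in $U^{op}$ shows $S$ preserves $(\hat R4)$, and $\vep$ clearly does.

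It remains to check the Hopf axioms on generators. Coassociativity is direct: $(\Delta\ot\mathrm{id})\Delta(e_i)=e_i\ot1\ot1+\om_i\ot e_i\ot1+\om_i\ot\om_i\ot e_i=(\mathrm{id}\ot\Delta)\Delta(e_i)$, similarly for $f_i$, and it is trivial on the grouplike generators. The counit identities $(\vep\ot\mathrm{id})\Delta=\mathrm{id}=(\mathrm{id}\ot\vep)\Delta$ read $e_i=e_i$, $f_i=f_i$ on these generators and are trivial on grouplikes. For the antipode, $m(S\ot\mathrm{id})\Delta(e_i)=S(e_i)+S(\om_i)e_i=-\om_i^{-1}e_i+\om_i^{-1}e_i=0=\vep(e_i)$ and $m(\mathrm{id}\ot S)\Delta(e_i)=e_i+\om_i S(e_i)=e_i-e_i=0$; the computation for $f_i$ uses $S(f_i)=-f_i\om_i'^{-1}$ and is identical in form; and on the grouplike generators $m(S\ot\mathrm{id})\Delta$ and $m(\mathrm{id}\ot S)\Delta$ return $1$ because $S$ inverts them. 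Since these maps are algebra, respectively anti-algebra, homomorphisms on $U$, agreement on generators yields the identities on all of $U$, completing the verification that $U_{r,s}(\hat{\mathfrak g})$ is a Hopf algebra.
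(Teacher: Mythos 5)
The paper itself offers no proof of this proposition: it is quoted from \cite{HRZ}, \cite{HZ1}, \cite{HZ2}, so there is nothing in the text to compare your argument against line by line. Your proposal is the standard direct verification and its computations are correct where you carry them out: the cancellation of the cross terms $\om_i f_j\ot e_i\om_j'$ and $f_j\om_i\ot\om_j'e_i$ in $[\Delta(e_i),\Delta(f_j)]$ does follow from $(\hat R2)$ applied on each tensor factor, the surviving terms do assemble into $\frac{1}{r-s}(\om_i\ot\om_i-\om_i'\ot\om_i')$ when $i=j$, and the antipode checks $S(e_i)+S(\om_i)e_i=0$, $e_i+\om_iS(e_i)=0$ and their $f_i$-counterparts are right. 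Two points deserve sharpening. First, the propagation of the antipode axiom from generators to all of $U$ is not because $m(S\ot\mathrm{id})\Delta$ is an (anti-)homomorphism --- it is neither --- but because the set $\{h: m(S\ot\mathrm{id})\Delta(h)=\vep(h)1\}$ is closed under multiplication whenever $S$ is an algebra anti-homomorphism; your phrase ``or compositions of such'' gestures at this but the convolution-algebra argument is what actually does the work. Second, the genuinely nontrivial step is compatibility of $\Delta$ with $(\hat R4)$: what must be shown is that $\Delta$ carries the Serre element into the ideal of $U\ot U$ generated by the Serre elements (equivalently, that the Serre element is skew-primitive modulo that ideal), which is slightly more than the vanishing identity your telescoping sum addresses; your reduction to rank two and the twisted Leibniz rule $\mathrm{ad}_l e_i(xy)=(\mathrm{ad}_l e_i\,x)y+(\om_i x\om_i^{-1})(\mathrm{ad}_l e_i\,y)$ are the right tools, but in the end you defer the finite computation to the same references the paper cites, so on the only hard point your proof and the paper's are equally reliant on \cite{HRZ, HZ1, HZ2}. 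Note also the mild circularity built into the statement itself --- $(\hat R4)$ is phrased via $\mathrm{ad}_l$ and $\mathrm{ad}_r$, which are defined through the very $\Delta$ and $S$ being constructed --- so strictly one should first unwind $\mathrm{ad}_l e_i(b)=e_ib-\om_i b\om_i^{-1}e_i$ into an explicit polynomial relation before checking that the Hopf structure maps descend from the free algebra; your argument is consistent with this reading but does not make it explicit.
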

\medskip

\noindent {\bf 2.2} \, As is well-known, quantum affine algebras
have another realization called Drinfeld realization \cite{D}.
For two-parameter cases, there also exists
analogous realization isomorphic to the algebra structure defined in
definition 2.1 (\cite{HRZ, HZ1, HZ2}).

\begin{defi}(\cite{HRZ},\,\cite{HZ1},\,\cite{HZ2})\,  The unital
associative algebra ${\mathcal U}_{r,s}(\widehat{\mathfrak {g}})$
 over $\mathbb{K}$  is generated by the
elements  $x_i^{\pm}(k)$, $a_i(\ell)$, $\om_i^{\pm1}$,
${\om'_i}^{\pm1}$, $\gamma^{\pm\frac{1}{2}}$,
${\gamma'}^{\,\pm\frac{1}2}$,  $(i\in I=\{1,\,2,\, \cdots, n\}$,
$k,\,k' \in \mathbb{Z}$, $\ell,\,\ell' \in \mathbb{Z}\backslash
\{0\})$, subject to the following defining relations:

\noindent $(\textrm{D1})$ \  $\gamma^{\pm\frac{1}{2}}$,
$\gamma'^{\,\pm\frac{1}{2}}$ are central such that
$\gamma\gamma'=(rs)^c $,\,
$\omega_i\,\omega_i^{-1}=\omega_i'\,\omega_i'^{\,-1}=1$, and for
\qquad $i,\,j\in I$, one has
\begin{equation*}
\begin{split}
[\,\omega_i^{\pm 1},\omega_j^{\,\pm 1}\,]=[\,\omega_i^{\pm
1},\omega_j'^{\,\pm 1}\,]=[\,\omega_i'^{\pm 1},\omega_j'^{\,\pm
1}\,]=0.
\end{split}
\end{equation*}
$$[\,a_i(\ell),a_j(\ell')\,]
=\delta_{\ell+\ell',0}\frac{
(rs)^{\frac{|\ell|}{2}}(r_is_i)^{-\frac{\ell a_{ij}}2}[\,\ell
a_{ij}\,]_i}{|\ell|}
\cdot\frac{\gamma^{|\ell|}-\gamma'^{|\ell|}}{r-s},
 \leqno(\textrm{D2})
$$
$$[\,a_i(\ell),~\om_j^{{\pm }1}\,]=[\,\,a_i(\ell),~{\om'}_j^{\pm
1}\,]=0.\leqno(\textrm{D3})
$$

$$
\om_i\,x_j^{\pm}(k)\, \om_i^{-1} =  \langle \omega_j',
\omega_i\rangle^{\pm 1} x_j^{\pm}(k), \qquad \om'_i\,x_j^{\pm}(k)\,
\om_i'^{\,-1} =  \langle \omega'_i, \omega_j\rangle
^{\mp1}x_j^{\pm}(k).\leqno(\textrm{D4})
$$
$$
\begin{array}{lll}
[\,a_i(\ell),x_j^{\pm}(k)\,]=\pm\frac{ (rs)^{\frac{|\ell|}{2}} (\la
i,\,i\ra^{\frac{\ell a_{ij}}{2}}- \la i,\,i\ra^{\frac{-\ell
a_{ij}}{2}})}
{\ell(r_i-s_i)}\gamma'^{\pm\frac{\ell}2}x_j^{\pm}(\ell{+}k),\quad
\textit{for} \quad \ell>0,
\end{array}\leqno{(\textrm{D$5_1$})}
$$
$$
\begin{array}{lll}
[\,a_i(\ell),x_j^{\pm}(k)\,]=\pm\frac{ (rs)^{\frac{|\ell|}{2}}(\la
i,\,i\ra^{\frac{\ell a_{ij}}{2}}- \la i,\,i\ra^{\frac{-\ell
a_{ij}}{2}})}
{\ell(r_i-s_i)}\gamma^{\pm\frac{\ell}2}x_j^{\pm}(\ell{+}k), \qquad
\textit{for} \quad \ell<0,
\end{array}\leqno{(\textrm{D$5_2$})}
$$
$$
\begin{array}{lll}
x_i^{\pm}(k{+}1)\,x_j^{\pm}(k') - \langle j,i\rangle^{\pm1} x_j^{\pm}(k')\,x_i^{\pm}(k{+}1)\\
=-\Bigl(\langle j,i\rangle\langle
i,j\rangle^{-1}\Bigr)^{\pm\frac1{2}}\,\Bigl(x_j^{\pm}(k'{+}1)\,x_i^{\pm}(k)-\langle
i,j\rangle^{\pm1} x_i^{\pm}(k)\,x_j^{\pm}(k'{+}1)\Bigr).
\end{array}\leqno{(\textrm{D6})}
$$

$$
[\,x_i^{+}(k),~x_j^-(k')\,]=\frac{\delta_{ij}}{r_i-s_i}\Big(\gamma'^{-k}\,{\gamma}^{-\frac{k+k'}{2}}\,
\om_i(k{+}k')-\gamma^{k'}\,\gamma'^{\frac{k+k'}{2}}\,\om'_i(k{+}k')\Big),\leqno(\textrm{D7})
$$
where $\om_i(m)$, $\om'_i(-m)~(m\in \mathbb{Z}_{\geq 0})$ such that
$\om_i(0)=\om_i$ and  $\om'_i(0)=\om_i'$ are defined as below:
\begin{gather*}\sum\limits_{m=0}^{\infty}\om_i(m) z^{-m}=\om_i \exp \Big(
(r_i{-}s_i)\sum\limits_{\ell=1}^{\infty}
 a_i(\ell)z^{-\ell}\Big),\quad \bigl(\om_i(-m)=0, \ \forall\;m>0\bigr); \\
\sum\limits_{m=0}^{\infty}\om'_i(-m) z^{m}=\om'_i \exp
\Big({-}(r_i{-}s_i)
\sum\limits_{\ell=1}^{\infty}a_i(-\ell)z^{\ell}\Big), \quad
\bigl(\om'_i(m)=0, \ \forall\;m>0\bigr).
\end{gather*}
$$x_i^{\pm}(m)x_j^{\pm}(k)=\langle j,i\rangle^{\pm1}x_j^{\pm}(k)x_i^{\pm}(m),
\qquad\ \hbox{for} \quad a_{ij}=0,\leqno(\textrm{D$8_1$})$$
$$
\begin{array}{lll}
& Sym_{m_1,\cdots
m_{n}}\sum_{k=0}^{n=1-a_{ij}}(-1)^k(r_is_i)^{\pm\frac{k(k-1)}{2}}
\Big[{1-a_{ij}\atop  k}\Big]_{\pm{i}}x_i^{\pm}(m_1)\cdots x_i^{\pm}(m_k) x_j^{\pm}(\ell)\\
&\hskip1.8cm \times x_i^{\pm}(m_{k+1})\cdots x_i^{\pm}(m_{n})=0,
\quad\hbox{for} \quad a_{ij}\neq 0, \quad  1\leq j<i<n,
\end{array} \leqno{(\textrm{D$8_2$})}
$$
$$
\begin{array}{lll}
& Sym_{m_1,\cdots
m_{n}}\sum_{k=0}^{n=1-a_{ij}}(-1)^k(r_is_i)^{\mp\frac{k(k-1)}{2}}
\Big[{1-a_{ij}\atop  k}\Big]_{\mp{i}}x_i^{\pm}(m_1)\cdots x_i^{\pm}(m_k) x_j^{\pm}(\ell)\\
&\hskip1.8cm \times x_i^{\pm}(m_{k+1})\cdots x_i^{\pm}(m_{n})=0,
\quad\hbox{For} \quad a_{ij}\neq 0, \quad  1\leq i<j<n.
\end{array} \leqno{(\textrm{D$8_3$})}
$$
where $[l ]_{\pm{i}},\,[l]_{\mp{i}}$ are defined the same as before
by replacing $r, s$ by $r_i, s_i$ respectively,
$\textit{Sym}_{m_1,\cdots, m_n}$ denotes symmetrization w.r.t. the
indices $(m_1, \cdots, m_n)$.
\end{defi}

Similar to the classical case the Poincare- Birkhoff-Witt theorem also holds \cite{HRZ}.
\smallskip

Let $\mathcal U_{r,s}(\mathfrak{n})$ denote the subalgebra of $\mathcal
U_{r,s}(\widetilde{\mathfrak n})$, generated by $x_i^+(0)$ ($i\in I$).
By definition, it is clear that $\mathcal U_{r,s}(\mathfrak{n})\cong
U_{r,s}(\mathfrak n)$, the subalgebra of $U_{r,s}(\mathfrak{g})$ generated
by $e_i$ ($i\in I$).

The algebra $\mathcal{U}_{r,s}(\widehat{\mathfrak{g}})$ has a triangular
decomposition:
$$\mathcal{U}_{r,s}(\widehat{\mathfrak{g}})=
\mathcal{U}_{r,s}(\widetilde{\mathfrak{n}}^-)\otimes\mathcal{U}_{r,s}^0(\widehat{\mathfrak{g}})
\otimes\mathcal{U}_{r,s}(\widetilde{\mathfrak{n}}),$$ where
$\mathcal{U}_{r,s}(\widetilde{\mathfrak{n}}^\pm)=\bigoplus_{\alpha\in\dot
Q^\pm}\mathcal{U}_{r,s}(\widetilde{\mathfrak{n}}^\pm)_\alpha$ is
generated respectively by $x_i^\pm(k)$ ($i\in I$), and
$\mathcal{U}_{r,s}^0(\widehat{\mathfrak{g}})$ is the subalgebra
generated by $\om_i^{\pm1}$, $\om_i'^{\pm1}$,
$\gamma^{\pm\frac1{2}}$, $\gamma'^{\pm\frac1{2}}$, $D^{\pm1}$,
$D'^{\pm1}$ and $a_i(\pm\ell)$ for $i\in I$, $\ell\in \mathbb{N}$.
Namely, $\mathcal{U}_{r,s}^0(\widehat{\mathfrak{g}})$ is generated by
the toral subalgebra $\mathcal{U}_{r,s}(\widehat{\mathfrak{g}})^0$ and
the quantum Heisenberg subalgebra $\mathcal
H_{r,s}(\widehat{\mathfrak{g}})$ generated by the quantum imaginary
root vectors $a_i(\pm\ell)$ ($i\in I$, $\ell\in \mathbb{N}$).
\medskip

\noindent {\bf 2.3}\, In this subsection, we give some
automorphisms of the two-parameter quantum affine algebras $U_{r,
s}(\widehat{\mathfrak{g}})$. Their proofs are direct verification.

\begin{prop}\, For $I-$tuples $\sigma=(\sigma_0, \cdots,\, \sigma_n)\in \{\pm 1
\}^{n+1}$, there exists an unique automorphism $a_\sigma$ of
two-parameter quantum affine algebra $U_{r, s}(\widehat{\mathfrak{g}})$
such that
$$a_\sigma(\omega_i)=\sigma_i\,\omega_i,\qquad a_\sigma(\omega'_i)=\sigma_i\,\omega'_i,$$
$$a_\sigma(e_i)=\sigma_i\,e_i,\qquad a_\sigma(f_i)=f_i.$$
\end{prop}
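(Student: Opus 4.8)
The plan is to verify directly that the prescribed assignment on generators respects all the defining relations $(\hat R1)$--$(\hat R4)$ of $U_{r,s}(\widehat{\mathfrak g})$, so that it extends to an algebra endomorphism, and then to exhibit an inverse. First I would record the full action of $a_\sigma$ on all generators: besides $a_\sigma(\omega_i^{\pm 1})=\sigma_i\omega_i^{\pm1}$, $a_\sigma(\omega_i'^{\pm1})=\sigma_i\omega_i'^{\pm1}$, $a_\sigma(e_i)=\sigma_i e_i$, $a_\sigma(f_i)=f_i$, one is forced to set $a_\sigma(\gamma^{\pm\frac12})=\gamma^{\pm\frac12}$ and $a_\sigma(\gamma'^{\pm\frac12})=\gamma'^{\pm\frac12}$ (the central elements must be fixed since $\gamma\gamma'=(rs)^c$ and $\sigma_i^2=1$). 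Note $\sigma_i\in\{\pm1\}\subset\mathbb K^{\times}$, so each generator is sent to a scalar multiple of itself or of another generator, which makes the checks essentially bookkeeping of signs.

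The key steps, in order: (i) For $(\hat R1)$, the $\omega$'s commute among themselves and $\gamma^{\pm\frac12},\gamma'^{\pm\frac12}$ stay central, and $\gamma\gamma'=(rs)^c$ is preserved since these elements are fixed; $\omega_i\omega_i^{-1}=1$ is preserved because $\sigma_i\cdot\sigma_i=\sigma_i^2=1$. (ii) For $(\hat R2)$, applying $a_\sigma$ to $\omega_j e_i\omega_j^{-1}=\langle i,j\rangle e_i$ gives $\sigma_j\omega_j\cdot\sigma_i e_i\cdot\sigma_j\omega_j^{-1}=\sigma_i\sigma_j^2(\omega_j e_i\omega_j^{-1})=\sigma_i\langle i,j\rangle e_i=\langle i,j\rangle(\sigma_i e_i)$, as required; the three other relations in $(\hat R2)$ are identical since $f_i$ is fixed and again $\sigma_j^2=1$. (iii) For $(\hat R3)$, $[a_\sigma(e_i),a_\sigma(f_j)]=\sigma_i[e_i,f_j]=\frac{\delta_{ij}}{r-s}\sigma_i(\omega_i-\omega_i')=\frac{\delta_{ij}}{r-s}(\sigma_i\omega_i-\sigma_i\omega_i')=\frac{\delta_{ij}}{r-s}(a_\sigma(\omega_i)-a_\sigma(\omega_i'))$, using that for $i\ne j$ both sides are zero and for $i=j$ the common factor $\sigma_i$ pulls through. (iv) For the $(r,s)$-Serre relations $(\hat R4)$, one must check that the left- and right-adjoint actions transform correctly; since the coproduct $\Delta$ of Proposition \ref{hopf} sends $e_i\mapsto e_i\otimes 1+\omega_i\otimes e_i$ and $f_i\mapsto 1\otimes f_i+f_i\otimes\omega_i'$, and $a_\sigma$ scales $e_i,\omega_i$ by $\sigma_i$ and fixes $f_i,\omega_i'$ up to the scaling of $\omega_i'$ by $\sigma_i$, one sees $(a_\sigma\otimes a_\sigma)\circ\Delta=\Delta\circ a_\sigma$ and $a_\sigma\circ S=S\circ a_\sigma$ on generators, hence $a_\sigma$ is a Hopf algebra map; consequently $a_\sigma\big((\ad_l e_i)^{1-a_{ij}}(e_j)\big)=\sigma_i^{\,?}(\ad_l a_\sigma(e_i))^{1-a_{ij}}(a_\sigma(e_j))=(\text{scalar})\cdot(\ad_l e_i)^{1-a_{ij}}(e_j)=0$, and similarly for the $f$-Serre relation. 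Finally, (v) uniqueness and invertibility: since $U_{r,s}(\widehat{\mathfrak g})$ is generated by these elements, an algebra homomorphism is determined by its values on them, giving uniqueness; and $a_\sigma\circ a_\sigma=\mathrm{id}$ on generators (each $\sigma_i^2=1$), so $a_\sigma$ is its own inverse and hence an automorphism.

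The main obstacle is step (iv): verifying compatibility with the twisted adjoint actions $\ad_l$ and $\ad_r$ used in the $(r,s)$-Serre relations. The cleanest route is to prove once and for all that $a_\sigma$ is a Hopf algebra homomorphism, i.e. that it intertwines $\Delta$, $\vep$, and $S$; this is a short check on generators using Proposition \ref{hopf}, after which the Serre relations follow automatically because $\ad_l a\,(b)=\sum a_{(1)}bS(a_{(2)})$ is built functorially from multiplication, $\Delta$ and $S$. One subtlety to note is bookkeeping of the scalar that comes out: $a_\sigma\big((\ad_l e_i)^{1-a_{ij}}(e_j)\big)$ equals $\sigma_i^{1-a_{ij}}\sigma_j$ times $(\ad_l e_i)^{1-a_{ij}}(e_j)$, but since the latter is already $0$ the precise power is irrelevant; what matters is only that the image is a nonzero scalar multiple of an expression known to vanish. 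With the Hopf-map observation in hand, every verification reduces to a one-line computation, so no genuinely hard estimate or structural argument is needed — the content is entirely the sign bookkeeping $\sigma_i^2=1$.
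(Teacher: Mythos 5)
The paper offers no written proof (it states that these automorphisms are verified directly), and your overall strategy --- check each defining relation on generators, then invert using $\sigma_i^2=1$ --- is exactly the intended one. Steps (i)--(iii) and (v) are fine. However, the justification you give in step (iv) contains a false claim: $a_\sigma$ is \emph{not} a Hopf algebra homomorphism. Indeed, $(a_\sigma\otimes a_\sigma)\Delta(e_i)=\sigma_i e_i\otimes 1+\sigma_i\omega_i\otimes\sigma_i e_i=\sigma_i e_i\otimes 1+\omega_i\otimes e_i$, whereas $\Delta(a_\sigma(e_i))=\sigma_i e_i\otimes 1+\sigma_i\omega_i\otimes e_i$; these disagree whenever $\sigma_i=-1$. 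Likewise $a_\sigma(S(e_i))=a_\sigma(-\omega_i^{-1}e_i)=-\omega_i^{-1}e_i=S(e_i)$, while $S(a_\sigma(e_i))=-\sigma_i\omega_i^{-1}e_i$, so $a_\sigma$ does not commute with $S$ either. Hence the proposed ``clean route'' of step (iv) --- deducing preservation of the Serre relations from functoriality of $\ad_l$ under a Hopf map --- does not go through as stated.

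The conclusion is nevertheless salvageable by a direct computation that bypasses the Hopf structure. Writing $\ad_l e_i(b)=e_i b+\omega_i\, b\, S(e_i)=e_i b-\omega_i b\,\omega_i^{-1}e_i$, one checks
$a_\sigma(\ad_l e_i(b))=\sigma_i e_i a_\sigma(b)-\sigma_i\omega_i\, a_\sigma(b)\,\sigma_i\omega_i^{-1}\sigma_i e_i=\sigma_i\,\ad_l e_i(a_\sigma(b))$,
because the two sign discrepancies (from $\Delta$ and from $S$) cancel; iterating gives $a_\sigma\bigl((\ad_l e_i)^{1-a_{ij}}(e_j)\bigr)=\sigma_i^{1-a_{ij}}\sigma_j\,(\ad_l e_i)^{1-a_{ij}}(e_j)=0$, and similarly $a_\sigma$ commutes with $\ad_r f_i$ on the nose since $f_i$ is fixed and the $\sigma_i$'s from $\omega_i'^{\pm1}$ cancel. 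Equivalently, one can observe that the expanded Serre relation is a linear combination of monomials $e_i^{k}e_j e_i^{1-a_{ij}-k}$, each of which picks up the same overall scalar $\sigma_i^{1-a_{ij}}\sigma_j$ under $a_\sigma$. With step (iv) repaired in this way, the proof is complete.
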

\medskip

\begin{prop}\, There exists an automorphism $\Gamma_1$ of
two-parameter quantum affine algebra $\cal{U}_{r,
s}(\widehat{\mathfrak{g}})$ such that
$$\Gamma_1(\gamma^{\pm\frac{1}{2}})=-\gamma^{\pm\frac{1}{2}},\qquad \Gamma_1({\gamma'}^{\pm\frac{1}{2}})=-{\gamma'}^{\pm\frac{1}{2}},
\qquad \Gamma_1(x_i^{\pm}(k))=(-1)^{k}x_i^{\pm}(k)$$
$$\Gamma_1(a_i(k))=a_i(k),\qquad \Gamma_1({\omega}_i)=\omega_i,\qquad \qquad \Gamma_1({\omega'}_i)=\omega'_i.$$
\end{prop}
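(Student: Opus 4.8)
The plan is to define $\Gamma_1$ on the Drinfeld generators $x_i^{\pm}(k)$, $a_i(\ell)$, $\omega_i^{\pm1}$, $\omega_i'^{\pm1}$, $\gamma^{\pm1/2}$, $\gamma'^{\pm1/2}$ of $\mathcal U_{r,s}(\widehat{\mathfrak g})$ by the formulas in the statement, to extend it $\mathbb{K}$-linearly and multiplicatively to the free algebra on these symbols, and then to check that it preserves every defining relation $(\mathrm{D}1)$--$(\mathrm{D}8_3)$; this shows $\Gamma_1$ descends to a well-defined algebra endomorphism of $\mathcal U_{r,s}(\widehat{\mathfrak g})$, and bijectivity will then follow formally since the assignment is an involution on generators. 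The formulas are compatible with $\gamma^{1/2}\gamma^{-1/2}=\gamma'^{1/2}\gamma'^{-1/2}=1$ and $\omega_i\omega_i^{-1}=\omega_i'\omega_i'^{-1}=1$, so the extension to the inverse generators is unambiguous.

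Before checking the relations I would record three elementary consequences of the definition. First, because $\gamma=(\gamma^{1/2})^2$ and $\gamma'=(\gamma'^{1/2})^2$, we get $\Gamma_1(\gamma^{\pm1})=\gamma^{\pm1}$ and $\Gamma_1(\gamma'^{\pm1})=\gamma'^{\pm1}$, while $\Gamma_1(\gamma^{\pm m/2})=(-1)^m\gamma^{\pm m/2}$ and $\Gamma_1(\gamma'^{\pm m/2})=(-1)^m\gamma'^{\pm m/2}$ for every $m\in\mathbb{Z}$. Second, since $\omega_i(m)$ and $\omega_i'(-m)$ are defined by generating series involving only $\omega_i$ (resp.\ $\omega_i'$) and the $a_i(\ell)$, all of which $\Gamma_1$ fixes, we have $\Gamma_1(\omega_i(m))=\omega_i(m)$ and $\Gamma_1(\omega_i'(-m))=\omega_i'(-m)$. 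Third, every structure constant occurring in $(\mathrm{D}1)$--$(\mathrm{D}8_3)$ --- the entries $\langle i,j\rangle$ of $J$, the quantum integers, the powers of $rs$ and $r_is_i$ --- lies in $\mathbb{K}$ and is therefore fixed by $\Gamma_1$.

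The verification then reduces to a sign count: in each relation, applying $\Gamma_1$ multiplies both sides by one and the same factor (or both sides are $0$), so the relation is preserved. Thus $(\mathrm{D}1)$--$(\mathrm{D}3)$ involve only fixed elements (using $\Gamma_1(\gamma^{|\ell|})=\gamma^{|\ell|}$ and $\Gamma_1(\gamma'^{|\ell|})=\gamma'^{|\ell|}$ in $(\mathrm{D}2)$); $(\mathrm{D}4)$ acquires $(-1)^k$ on both sides; in $(\mathrm{D}5_1)$ and $(\mathrm{D}5_2)$ the left side acquires $(-1)^k$ while the right side acquires $(-1)^{\ell}\cdot(-1)^{\ell+k}=(-1)^k$ from the half-integer power of $\gamma'$ (resp.\ $\gamma$) together with $x_j^{\pm}(\ell+k)$; $(\mathrm{D}6)$ acquires $(-1)^{k+k'+1}$ on both sides; $(\mathrm{D}8_1)$ acquires $(-1)^{m+k}$ on both sides; and in the quantum Serre relations $(\mathrm{D}8_2)$--$(\mathrm{D}8_3)$ every summand carries the common factor $(-1)^{m_1+\cdots+m_n+\ell}$, so the identity $=0$ is undisturbed. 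The one step I expect to require genuine care, and the only plausible place for a sign error, is $(\mathrm{D}7)$: the left side $[x_i^{+}(k),x_j^{-}(k')]$ scales by $(-1)^{k+k'}$, and one must verify that the right side does as well; there $\gamma'^{-k}$, $\gamma^{k'}$, $\omega_i(k{+}k')$ and $\omega_i'(k{+}k')$ are all fixed, while the half-integer powers $\gamma^{\mp(k+k')/2}$ and $\gamma'^{\pm(k+k')/2}$ each contribute precisely $(-1)^{k+k'}$, so both terms on the right scale by $(-1)^{k+k'}$, in agreement with the left side. Finally, $\Gamma_1^2$ fixes each generator --- $(-1)^{2k}=1$ on $x_i^{\pm}(k)$, $-(-\gamma^{\pm1/2})=\gamma^{\pm1/2}$, and likewise for $\gamma'^{\pm1/2}$ --- so $\Gamma_1^2=\mathrm{id}$ and $\Gamma_1$ is the asserted automorphism.
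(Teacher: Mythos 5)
Your proposal is correct and is exactly the ``direct verification'' the paper alludes to (the paper offers no written proof beyond that remark): define $\Gamma_1$ on the Drinfeld generators, check that each of $(\mathrm{D}1)$--$(\mathrm{D}8_3)$ scales by a common sign on both sides, and conclude bijectivity from $\Gamma_1^2=\mathrm{id}$. Your sign bookkeeping, including the only delicate cases $(\mathrm{D}5)$ and $(\mathrm{D}7)$ where the half-integer powers of $\gamma$ and $\gamma'$ must compensate the shift in the mode index, checks out.
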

\medskip

\begin{prop}\, For $a\in \mathbb{C}*$, there exists an unique automorphism $\Gamma_2$ of
two-parameter quantum affine algebra $\cal{U}_{r,
s}(\widehat{\mathfrak{g}})$ such that
$$\Gamma_2(\gamma^{\pm\frac{1}{2}})=\gamma^{\pm\frac{1}{2}},\qquad \Gamma_2({\gamma'}^{\pm\frac{1}{2}})={\gamma'}^{\pm\frac{1}{2}},
\qquad \Gamma_2(x_i^{\pm}(k))=a^{k}x_i^{\pm}(k)$$
$$\Gamma_2(\omega_i(k))=a^{k}\omega_i(k),\qquad \Gamma_2(\omega'_i(k))=a^{k}\omega'_i(k),
\qquad \Gamma_2({\omega}_i)=\omega_i,\qquad
\Gamma_2({\omega'}_i)=\omega'_i.$$
\end{prop}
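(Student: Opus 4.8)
The plan is to verify directly that the assignment on generators respects every defining relation of the Drinfeld presentation (D1)--(D8$_3$), which by the universal property of the free associative algebra produces a well-defined algebra endomorphism $\Gamma_2$, and then to exhibit an inverse to get an automorphism. First I would record the action of $\Gamma_2$ on the remaining generators, namely $\Gamma_2(a_i(\ell))=a^{\ell}a_i(\ell)$ and $\Gamma_2(\gamma^{\pm 1/2})=\gamma^{\pm 1/2}$, $\Gamma_2(\omega_i)=\omega_i$, $\Gamma_2(\omega_i')=\omega_i'$; note that the prescribed behaviour on $\omega_i(k)$, $\omega_i'(k)$ for $k>0$ is forced by the generating-series definitions once we know it on $\omega_i$ and the $a_i(\ell)$, since the exponential in (D7) then scales the degree-$k$ coefficient by exactly $a^{k}$. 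This consistency check is the first thing to confirm.

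Next I would run through the relations. Relations (D1), (D3), (D4) involve only $\omega_i^{\pm1}$, $\omega_i'^{\pm1}$, $\gamma^{\pm1/2}$ (each fixed by $\Gamma_2$) together with $x_j^{\pm}(k)$ (scaled by $a^k$) or $a_i(\ell)$ (scaled by $a^\ell$), so the two sides of each relation scale by the same power of $a$ and equality is preserved. For (D2) the total degree on the left is $\ell+\ell'=0$ by the Kronecker delta, so both sides are fixed. For (D5$_1$)--(D5$_2$): the left side $[a_i(\ell),x_j^{\pm}(k)]$ scales by $a^{\ell+k}$, while the right side contains $x_j^{\pm}(\ell+k)$, which also scales by $a^{\ell+k}$ — the scalar prefactor and the central $\gamma'^{\pm\ell/2}$ are untouched. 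The same degree-counting bookkeeping handles (D6), where every monomial on both sides has total degree $k+k'+1$, and (D7), where the left side scales by $a^{k+k'}$ and on the right $\omega_i(k+k')$, $\omega_i'(k+k')$ scale by $a^{k+k'}$ as just noted. Finally (D8$_1$)--(D8$_3$) are homogeneous of total degree $m_1+\cdots+m_n+\ell$ in the $x$-variables, with all numerical coefficients $(r_is_i)^{\pm k(k-1)/2}$ and the Gaussian binomials being scalars; hence both sides scale uniformly and the relations are preserved. Thus $\Gamma_2$ is a well-defined endomorphism.

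To upgrade to an automorphism, observe that $\mathbb{C}^{\times}$ is a group and $\Gamma_2^{(a)}\circ\Gamma_2^{(b)}=\Gamma_2^{(ab)}$ on generators, hence on all of $\mathcal{U}_{r,s}(\widehat{\mathfrak{g}})$; in particular $\Gamma_2^{(a)}\circ\Gamma_2^{(a^{-1})}=\Gamma_2^{(1)}=\mathrm{id}$, so $\Gamma_2=\Gamma_2^{(a)}$ is invertible with inverse $\Gamma_2^{(a^{-1})}$. Uniqueness is immediate since the listed elements generate the algebra.

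I do not anticipate a genuine obstacle here — the whole construction is a grading argument: assigning $x_i^{\pm}(k)$ degree $k$, $a_i(\ell)$ degree $\ell$, and the toral/central generators degree $0$ gives $\mathcal{U}_{r,s}(\widehat{\mathfrak{g}})$ a $\mathbb{Z}$-grading (this is essentially the loop grading), and $\Gamma_2$ is just the associated one-parameter group of grading automorphisms $u\mapsto a^{\deg u}u$. The only mild point of care is checking that the defining relations are in fact homogeneous for this grading — the one worth double-checking is (D7), because it forces the degree of $\omega_i(m)$ to be $m$, which must then be consistent with the exponential generating-series definition; as noted above it is, since $(r_i-s_i)\sum_{\ell\ge1}a_i(\ell)z^{-\ell}$ has its degree-$\ell$ term $a_i(\ell)$ paired with $z^{-\ell}$, making $\omega_i(m)$ genuinely homogeneous of degree $m$. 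Once that is in hand, every step above is routine degree bookkeeping and the proof is complete.
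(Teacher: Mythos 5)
Your proof is correct and is exactly the "direct verification" the paper has in mind: the defining relations (D1)--(D8$_3$) are homogeneous for the loop grading $\deg x_i^{\pm}(k)=k$, $\deg a_i(\ell)=\ell$, $\deg\omega_i(m)=m$, and $\Gamma_2$ is the resulting grading automorphism $u\mapsto a^{\deg u}u$, with inverse given by $a^{-1}$. Your consistency check that $\Gamma_2(a_i(\ell))=a^{\ell}a_i(\ell)$ forces $\Gamma_2(\omega_i(m))=a^{m}\omega_i(m)$ via the exponential generating series is the one point genuinely worth writing down, and you have it right.
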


\section{Finite-dimensional representations of $U_{r, s}(\widehat{\mathfrak{g}})$}

\noindent {\bf 3.1}\, In this subsection, we study finite-dimensional representation
theory of $U_{r, s}(\widehat{\mathfrak{g}})$ analogue to the
one-parameter situation (\cite{CP1}-\cite{CP3}). For finite dimensional
$\mathfrak g$ the highest weight representations of $U_{r,
s}({\mathfrak{g}})$ have been discussed in \cite{BW2} and \cite{BGH2}.

Let $P$ be the weight lattice of the simple Lie algebra $\mathfrak{g}$,
and $W$ be a representation of $U_{r,\,s}(\mathfrak{g})$. We say
$\lambda\in P$ is a weight of $W$, if the weight space
$$W_\lambda=\{ w\in W|\,\om_i w=\langle \om_\lambda,\, \om_i\rangle w,\,\om'_i w=\langle \om_i,\,
\om_\lambda\rangle^{-1}w ,\, \forall i \}\neq 0.$$
Here we have extended the definition of $\langle  \, \, \, \rangle$ from
$\lambda \in Q$ to $\lambda \in P$ via appropriate
half-integer powers when necessary. A representation $W$ of
$U_{r,\,s}(\mathfrak{g})$ is said to be of type 1, if it is the direct
sum of its weight spaces, that is,
$$W=\bigoplus\limits_{\lambda \in wt(W)} W_\lambda,$$
where $wt(W)$ is the set of weight of $W$.

A non-zero vector $w\in W_\lambda$ is called a highest weight vector
if $e_i \cdot w=0$ for all $i\in I$, and $W$ is a highest weight
representation with highest weight $\lambda$ if
$W=U_{r,\,s}(\mathfrak{g}) w$ for some highest weight vector $w\in
W_\lambda$. Any highest weight representation is of type 1.

We now turn to the representation theory of
$U_{r,\,s}(\hat{\mathfrak{g}})$. A representation $V$ of
$U_{r,\,s}(\hat{\mathfrak{g}})$ is of type 1 if $\gamma^{\frac{1}{2}}$
and $\gamma'^{\frac{1}{2}}$ act as the identity on $V$, and if $V$
is of type 1 as a representation of $U_{r,\,s}(\mathfrak{g})$. A vector
$v\in V$ is a highest weight vector if
$$x_i^+(k)\cdot v=0,\, \om_i(m)\cdot v= \Phi_{i\,m}^{+}v,\,\om'_i(-m)\cdot v= \Phi_{i\,-m}^{-}v,\,
\gamma^{\frac{1}{2}}\cdot v=v,\,\gamma'^{\frac{1}{2}}\cdot v=v,$$
for some complex number $\Phi_{i\,\pm m}^{\pm}$. A type 1
representation $V$ is a highest weight representation if
$V=U_{r,\,s}(\hat{\mathfrak{g}}) v$ for some highest weight vector $v$,
and Let $\epsilon=\pm=\pm 1$, the pair of $(I\times
\mathbb{Z})$-tuples $(\Phi_{i,\,\epsilon m}^{\epsilon})_{i \in I,\,
m\in\mathbb{Z}_{\geq 0}}$ is called the highest weight of $V$.

\smallskip


The following result can be proved similarly as in the one-parameter
case. In fact they are exactly proved as in the classical cases
(cf. \cite{CP1}-\cite{CP3}).

\begin{prop}
If $V$ is a finite dimensional irreducible  representation of
$U_{r,\,s}(\hat{\mathfrak{g}})$, then

 (1)\,$V$ can be obtained from a type 1 representation by
twisting with a product of an automorphism $a_{\sigma}$.

 (2)\, If
$V$ is of type 1, then $V$ is a highest weight module.
\end{prop}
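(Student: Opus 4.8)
The plan is to follow the classical template of Chari--Pressley adapted to the two-parameter setting. For part~(1), let $V$ be a finite-dimensional irreducible $U_{r,s}(\widehat{\mathfrak g})$-module. First I would observe that, since $\gamma^{\pm1/2}$ and $\gamma'^{\pm1/2}$ are central and satisfy $\gamma\gamma'=(rs)^c$, on an irreducible module $\gamma^{1/2}$ and $\gamma'^{1/2}$ act by scalars, and finite-dimensionality together with the weight relations in $(\hat R2)$ forces these scalars to be roots of unity times a fixed quantity; as in the one-parameter case one shows the central charge $c$ must act as $0$, so $\gamma^{1/2}$ and $\gamma'^{1/2}$ act by signs. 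Restricting to the finite-type subalgebra $U_{r,s}(\mathfrak g)$, the $\omega_i,\omega_i'$ act semisimply (their eigenvalues lie in a finitely generated subgroup of $\mathbb K^\times$ with all but finitely many powers absent, hence the module is a sum of weight spaces after possibly adjusting by signs). The point is that the signs by which $\omega_i$, $\omega_i'$ and $\gamma^{1/2}$, $\gamma'^{1/2}$ act can all be absorbed: precompose with a suitable automorphism $a_\sigma$ from Proposition~2.5 (and, if needed, the automorphism $\Gamma_1$ of Proposition~2.6 to fix the sign of $\gamma^{1/2}$). After this twist, $\gamma^{1/2}=\gamma'^{1/2}=1$ and $V$ is a sum of its $U_{r,s}(\mathfrak g)$-weight spaces with weights in $P$, i.e.\ $V$ is of type~1.

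For part~(2), assume $V$ is of type~1. Since $\operatorname{wt}(V)\subset P$ is finite and $W$-invariant-in-spirit (or at least bounded), choose a weight $\lambda$ that is maximal with respect to the standard partial order on $P$ coming from the positive roots of $\mathfrak g$; pick $0\neq v\in V_\lambda$. Maximality gives $e_i\cdot v=0=x_i^+(0)\cdot v$ for all $i\in I$. The next step is to upgrade this to $x_i^+(k)\cdot v=0$ for \emph{all} $k\in\mathbb Z$: using the grading by the root lattice $\dot Q$ (the ``$\dot Q$-degree''), each $x_i^+(k)$ raises the finite-type weight by $\alpha_i$, so $x_i^+(k)\cdot v\in V_{\lambda+\alpha_i}=0$ by maximality of $\lambda$. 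Similarly $a_i(\ell)$ preserves the finite-type weight, so $V_\lambda$ is stable under the commutative ``diagonal'' subalgebra generated by $\omega_i(m),\omega_i'(-m)$ by $(\textrm D3)$; on the finite-dimensional space $V_\lambda$ these commuting operators have a common eigenvector. Replacing $v$ by such a common eigenvector (still in $V_\lambda$, hence still killed by all $x_i^+(k)$) yields $\omega_i(m)\cdot v=\Phi_{i,m}^+v$, $\omega_i'(-m)\cdot v=\Phi_{i,-m}^-v$. Thus $v$ is a highest weight vector in the Drinfeld sense, and $U_{r,s}(\widehat{\mathfrak g})\cdot v$ is a nonzero submodule, so by irreducibility $V=U_{r,s}(\widehat{\mathfrak g})\cdot v$ is a highest weight module.

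I would carry out the steps in the order: (i) $\gamma^{1/2},\gamma'^{1/2}$ act by scalars and the central charge vanishes; (ii) the finite-type Cartan elements act semisimply with a finite weight system, up to signs; (iii) absorb all signs by $a_\sigma$ and $\Gamma_1$ to reach type~1 --- this completes~(1); (iv) pick a maximal weight $\lambda$ and $v\in V_\lambda$; (v) use the $\dot Q$-grading to get $x_i^+(k)v=0$ for all $k$; (vi) diagonalize the commuting operators $\omega_i(m),\omega_i'(-m)$ on $V_\lambda$; (vii) conclude $V=U_{r,s}(\widehat{\mathfrak g})v$ by irreducibility.

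The main obstacle I anticipate is step~(i)--(ii): making precise why finite-dimensionality forces $\gamma^{\pm1/2},\gamma'^{\pm1/2}$ to act by roots of unity (indeed by signs) and why the central charge $c$ acts as $0$. In the one-parameter case this uses that on a finite-dimensional module the $q^{h_i}$ must have eigenvalues that are roots of unity or that $K_\delta=q^c$ acts trivially, argued via the imaginary-root Heisenberg relation $(\textrm D2)$: if $\gamma^{|\ell|}\neq\gamma'^{|\ell|}$ then the $a_i(\ell)$ generate an infinite-dimensional Heisenberg-type algebra acting on $V$, contradicting $\dim V<\infty$; hence $\gamma=\gamma'$ on $V$, and combined with $\gamma\gamma'=(rs)^c$ one gets $\gamma^2=(rs)^c$, forcing (after the twist) $c=0$ and $\gamma=\gamma'=1$. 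The two-parameter bookkeeping of the structural constants $\langle i,j\rangle$ and the substitutions $r\mapsto r_i$, $s\mapsto s_i$ in $(\textrm D2)$ and $(\textrm D5)$ must be tracked carefully, but the structure of the argument is identical to \cite{CP1}--\cite{CP3}, and everything else is routine once type~1 is established.
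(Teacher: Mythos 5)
Your proposal is correct and follows exactly the route the paper itself takes: the paper gives no independent argument but simply asserts the result is "proved as in the classical cases (cf.\ Chari--Pressley)", and your sketch is a faithful adaptation of that standard argument (trace/Heisenberg argument to kill the central charge and force $\gamma=\gamma'=\pm1$, absorption of signs by $a_\sigma$, then a maximal weight plus the $\dot Q$-grading and simultaneous diagonalization of the commuting $\omega_i(m),\omega_i'(-m)$ on $V_\lambda$). No discrepancy with the paper's approach.
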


\medskip

 \noindent {\bf 3.2}\, In this subsection. we give the the main results on the finite-dimensional representation
 of $U_{r, s}(\widehat{\mathfrak{g}})$. Though the results are similar to the
 one-parameter case, they also carry some different aspects for the
 two-parameter case.

 If $\lambda \in P^+$, let
$\mathcal{P}^{\lambda}$ be the set of $I$-tuples $(P_i)_{i\in I}$ of
polynomials $P_i\in \mathbb{C}[u]$, with constant term 1, such that
$deg(P_i)=\lambda(i)$ for all $i\in I$. Set
$\mathcal{P}=\bigcup\limits_{\lambda\in P^+}\mathcal{P}^{\lambda}$.

\begin{theo}\label{T:drin1}
Let $\bf{\Phi^\epsilon}=(\Phi_{i,\,\epsilon m}^\epsilon )_{in \in
I,\, m\in\mathbb{Z}}$ be a pair of $(I\times \mathbb{Z})$-tuples of
complex numbers, then the irreducible representation
$V\bf{(\Phi,\,\Psi)}$ of $U_{r,\,s}(\hat{\mathfrak{g}})$ is
finite-dimensional if and only if there exists $\bf{P}=(P_i)_{i\in
I}\in \mathcal{P}$ such that
$$\sum\limits_{m=0}^{\infty}\Phi_{i,\,\epsilon m}^\epsilon z^{\epsilon m}
=r_i^{deg(P_i)}\frac{P_i((rs)^{\frac{1-\epsilon}{2}deg(P_i)}s_iz)}{P_i((rs)^{\frac{1-\epsilon}{2}deg(P_i)}r_iz)}
$$ in the sense that the positive and the negative terms are the Laurent
expansions of the middle term about $0$ and $\infty$, respectively.
\end{theo}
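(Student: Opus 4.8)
The plan is to follow the classical Chari--Pressley strategy adapted to the two-parameter setting, reducing everything to the rank-one computations inside $U_{r,s}(\widehat{\mathfrak{sl}}_2)$ via the $i$-th Drinfeld subalgebra. First I would establish the ``only if'' direction. Assume $V=V(\mathbf{\Phi},\mathbf{\Psi})$ is finite dimensional with highest weight vector $v$. Restricting to the subalgebra generated by $x_i^{\pm}(k)$, $a_i(\ell)$ and the toral elements for a fixed $i$ (which is a quotient of $U_{r,s}(\widehat{\mathfrak{sl}}_2)$ up to rescaling the parameters to $r_i,s_i$), finite dimensionality of $V$ forces finite dimensionality of the cyclic module generated by $v$ over this subalgebra. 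Then I would invoke the $\widehat{\mathfrak{sl}}_2$ analysis (Section 4 of the paper) to conclude that there is a polynomial $P_i\in\mathbb{C}[u]$ with constant term $1$ and $\deg P_i=\lambda(i)$, where $\lambda$ is the (necessarily dominant integral) restriction of the highest weight to $P$, such that the two generating series $\sum_m \Phi_{i,m}^{+}z^{m}$ and $\sum_m \Phi_{i,-m}^{-}z^{-m}$ are the expansions at $0$ and $\infty$ of the single rational function on the right-hand side. The role of the shift $(rs)^{\frac{1-\epsilon}{2}\deg P_i}$ is exactly the two-parameter bookkeeping that records the difference between the action of $\omega_i(m)$ and $\omega'_i(-m)$; I would track it by comparing relation $(\mathrm{D}7)$ evaluated on $v$ with the defining generating functions for $\omega_i(m)$ and $\omega'_i(-m)$, noting that the central elements $\gamma^{1/2},\gamma'^{1/2}$ act trivially on a type 1 module so the powers of $\gamma,\gamma'$ in $(\mathrm{D}5_1)$--$(\mathrm{D}7)$ collapse.

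For the ``if'' direction, given $\mathbf{P}=(P_i)\in\mathcal{P}^{\lambda}$ I would construct a finite-dimensional module realizing the prescribed eigenvalues. The standard approach is to build, for each root $a$ appearing in $P_i$, a one-dimensional evaluation-type building block and take a suitable tensor product; concretely one uses the evaluation homomorphism $U_{r,s}(\widehat{\mathfrak{g}})\to U_{r,s}(\mathfrak{g})$ twisted by $\Gamma_2$ (Proposition for $a\in\mathbb{C}^{\times}$) applied to the fundamental-type modules $V(\varpi_i)$, whose highest weight generating series can be computed directly from $(\mathrm{D}7)$ and are elementary (degree-one $P_i$). Tensoring these, using the Hopf structure (Proposition \ref{hopf}) to compute the highest weight of the tensor product as the ``product'' of the series, and then passing to the irreducible quotient generated by the tensor of highest weight vectors, yields a finite-dimensional irreducible module whose Drinfeld data is precisely the series built from $\mathbf{P}$. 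Since $V(\mathbf{\Phi},\mathbf{\Psi})$ is by definition the unique irreducible highest weight module with that data, this proves the claim. One must check that the tensor product really is nonzero on the highest-weight line and that the resulting $\Phi$-series factor correctly; the two-parameter shifts must be shown to be consistent under this multiplication, which is a direct though slightly delicate computation with the exponential generating functions.

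I expect the main obstacle to be the bookkeeping of the extra parameter, specifically pinning down the precise shift factor $(rs)^{\frac{1-\epsilon}{2}\deg(P_i)}$ and verifying it is forced (not merely consistent). In the one-parameter case there is a single Drinfeld polynomial and the symmetry between $\omega_i(m)$ and $\omega'_i(-m)$ is governed by the single $q$; here the asymmetry between $r$ and $s$ means the ``positive'' and ``negative'' halves are expansions of rational functions that differ by this monomial twist, reflecting the paper's stated phenomenon that there is genuinely a \emph{pair} of related Drinfeld polynomials. The cleanest way to handle this is to do the $\widehat{\mathfrak{sl}}_2$ case carefully once — which the paper defers to Section 4 — and then note that the general statement is the $i$-by-$i$ assembly of that case, the cross relations for $i\neq j$ contributing nothing new to the highest-weight eigenvalues beyond the constraint $\deg P_i=\lambda(i)$ coming from the $\mathfrak{g}$-integrability established in Proposition 3.5. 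A secondary technical point is justifying the rationality of the generating series (that the two one-sided series come from a common rational function at all); this follows, as in Chari--Pressley, from a finite recursion among the $\Phi_{i,m}^{\epsilon}$ forced by finite dimensionality, which I would extract from relation $(\mathrm{D}2)$ together with the commutators $(\mathrm{D}5_1)$--$(\mathrm{D}5_2)$ acting on $v$.
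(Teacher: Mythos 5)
Your necessity argument coincides with the paper's: both reduce to the $i$-th copy of $U_{r,s}(\widehat{\mathfrak{sl}}_2)$ and defer the rank-one computation to Section 4 (the paper simply declares this direction ``trivial''). For sufficiency, however, you take a genuinely different route. The paper argues that, given $\mathbf{P}$, each $i$-th $\widehat{\mathfrak{sl}}_2$-copy generates finite-dimensional subspaces of the highest weight module and concludes finite dimensionality from the union of these spans; you instead build the module explicitly as the irreducible quotient of a tensor product of fundamental representations, which is exactly the content the paper develops separately afterwards as Theorem \ref{T:tensor} and its corollary. Your route is closer to Chari--Pressley's actual argument and has the advantage of being constructive, but it carries two obligations that the paper's integrability argument sidesteps: (i) you must first know that the fundamental representations $V_{\varpi_i}(a)$ are themselves finite dimensional, which is an instance of the very statement being proved and needs a separate argument for general $\mathfrak{g}$ (the paper supplies it only for $\mathfrak{sl}_2$ via evaluation modules); and (ii) the evaluation homomorphism $U_{r,s}(\widehat{\mathfrak{g}})\to U_{r,s}(\mathfrak{g})$ you invoke exists only in type $A$, so for other types the fundamental modules cannot be obtained that way. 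Conversely, the paper's ``union of spans'' phrasing is itself loose (a union of finite-dimensional subspaces need not span a finite-dimensional space; what is really needed is integrability together with finiteness of the weight set below a dominant integral weight), so neither sketch is complete on this point. Your remarks on how the shift $(rs)^{\frac{1-\epsilon}{2}\deg(P_i)}$ is forced by comparing $(\mathrm{D}7)$ with the generating functions for $\omega_i(m)$ and $\omega'_i(-m)$, and on the rationality of the one-sided series, correctly identify the technical content that the paper pushes into Section 4.
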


\begin{proof} We will discuss the case of $U_{r,\,s}(\hat{\mathfrak{sl}_2})$
in next section. Assuming the result is true for
$U_{r,\,s}(\hat{\mathfrak{sl}_2})$, we can argue the general case
easily. First of all, the necessary condition (the ``only if'' part)
is trivial. Now suppose that there exists a family of polynomials
$\bf{P}=(P_i)_{i\in I}\in \mathcal{P}$ satisfying the condition in
the theorem. For each $i\in I$, as a module for the $i$th copy
$U_{r,\,s}(\hat{\mathfrak{sl}_2})$, the existence of Drinfeld
polynomials implies that for each $i\in I$ the span of irreducible
$i$th $U_{r,\,s}(\hat{\mathfrak{sl}_{2}})$-module is finite dimensional.
However, as in the case of quantum affine algebras, the union
of these spans are exactly the whole module for
$U_{r,\,s}(\hat{\mathfrak{g}})$, so it is also finite dimensional.
\end{proof}

Assigning to $V$ the n-tuple $P$ defines a bijection between the set
of isomorphism classes of finite dimensional irreducible
representation of $U_{r, s}(\widehat{\mathfrak{g}})$ of type 1 and $P$.
We denote the finite-dimensional irreducible representation of
$U_{r, s}(\widehat{\mathfrak{g}})$ associated to $\bf{P}$ by
$V(\bf{P})$, and we will simply say that $\bf{P}$ its highest weight.

\begin{remark}\,
For each $i\in I$ and $a\in \mathbb{C}^*$, we can define the
irreducible representation $V_{w_i}(a)$ as $V(P_a^{(i)})$, where
$P_a^{(i)}=(P_1,\,\cdots, \, P_n)$ is the n-tuple of polynomials,
such that $P_i(u)=1-au, \, P_j(u)=1,$ for all $j\neq i$. We call
$V_{w_i}(a)$ the ith fundamental representation of $U_{r,
s}(\widehat{\mathfrak{g}})$. As we will see in the following
they constitute the building blocks
for general finite dimensional modules.
\end{remark}

\begin{theo}\label{T:tensor}
Let $\bf{P},\,\bf{Q}\in \mathcal{P}$ be as above, Let $v_{\bf {P}}$
and $v_{\bf {Q}}$ be highest weight vector of $V(\bf{P})$ and
$V(\bf{Q})$, respectively. If $\bf{P}\otimes \bf{Q}$ denotes the
$I-$tuple $(P_iQ_i)_{i\in I}$. Then $V(\bf{P}\otimes \bf{Q})$ is
isomorphic to a quotient of the submodule of $V(\bf{P})\otimes
V(\bf{Q})$ generated by the tensor product of the highest weight
vectors $v_{\bf {P}}$ and $v_{\bf {Q}}$.
\end{theo}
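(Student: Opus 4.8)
The plan is to realize $V(\mathbf P\otimes\mathbf Q)$ as the unique irreducible quotient of the submodule $W:=\mathcal U_{r,s}(\widehat{\mathfrak g})\cdot(v_{\mathbf P}\otimes v_{\mathbf Q})$ of $V(\mathbf P)\otimes V(\mathbf Q)$. There are three things to establish: (i) $v_{\mathbf P}\otimes v_{\mathbf Q}$ is a highest weight vector of $V(\mathbf P)\otimes V(\mathbf Q)$; (ii) the associated highest weight is, slot by slot, the ``product'' of the highest weights of $V(\mathbf P)$ and of $V(\mathbf Q)$; (iii) that product highest weight is the one attached by Theorem~\ref{T:drin1} to the tuple $\mathbf P\otimes\mathbf Q=(P_iQ_i)_{i\in I}$. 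Granting (i)--(iii): $W$ is a highest weight module, and it is finite dimensional since $V(\mathbf P)\otimes V(\mathbf Q)$ is; as usual $W$ then has a unique irreducible quotient $\overline W$, which is a finite dimensional type~$1$ irreducible whose highest weight is that of $v_{\mathbf P}\otimes v_{\mathbf Q}$, so by Theorem~\ref{T:drin1} together with the bijection between finite dimensional type~$1$ irreducibles and $\mathcal P$ we must have $\overline W\cong V(\mathbf P\otimes\mathbf Q)$. Since $\overline W$ is a quotient of $W$, this is exactly the assertion of the theorem.

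For (i) and (ii) the essential input is the partial shape of the coproduct on the Drinfeld generators. Proposition~\ref{hopf} records $\Delta$ only on the Chevalley generators $e_i,f_i,\om_i^{\pm1}$, so --- exactly as in the one-parameter theory (cf.\ \cite{CP1}-\cite{CP3}) --- one first transports the coproduct to the Drinfeld generators through the isomorphism between the two realizations and records two ``triangularity'' facts: every summand of $\Delta\bigl(x_i^{+}(k)\bigr)$ carries a positive root vector acting on one of the two tensor factors, and $\Delta\bigl(\om_i(m)\bigr)$ equals $\sum_{p+q=m}\om_i(p)\otimes\om_i(q)$ plus correction terms each carrying a positive root vector on the right factor, together with the mirror statement for $\om'_i(-m)$; central factors $\gamma^{\pm1/2},\gamma'^{\pm1/2}$ that may occur act as $1$ throughout. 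Only these shapes matter, since every correction term then annihilates $v_{\mathbf P}\otimes v_{\mathbf Q}$. Applying this gives $x_i^{+}(k)\cdot(v_{\mathbf P}\otimes v_{\mathbf Q})=0$; and, writing $\Phi_i^{+}(z)=\sum_{m\ge0}\Phi_{i,m}^{+}z^{m}$ for the scalar series by which $\sum_{m\ge0}\om_i(m)z^{m}$ acts on a highest weight vector, one gets the multiplicativity $\Phi_i^{+}(z)|_{W}=\Phi_i^{+}(z)|_{V(\mathbf P)}\cdot\Phi_i^{+}(z)|_{V(\mathbf Q)}$, and likewise for $\Phi_i^{-}(z)=\sum_{m\ge0}\Phi_{i,-m}^{-}z^{-m}$. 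Since $\gamma^{\pm1/2}$ and $\gamma'^{\pm1/2}$ act as $1$ on both factors, $v_{\mathbf P}\otimes v_{\mathbf Q}$ is a highest weight vector, which settles (i) and (ii).

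For (iii) it suffices to examine $\Phi_i^{+}$. By Theorem~\ref{T:drin1}, $\Phi_i^{+}(z)|_{V(\mathbf P)}$ is the expansion about $0$ of $r_i^{\deg P_i}P_i(s_iz)/P_i(r_iz)$, and similarly for $\mathbf Q$; hence by the multiplicativity just obtained, $\Phi_i^{+}(z)|_{W}$ is the expansion about $0$ of $r_i^{\deg P_i+\deg Q_i}\,(P_iQ_i)(s_iz)/(P_iQ_i)(r_iz)$, which, since $\deg(P_iQ_i)=\deg P_i+\deg Q_i$, is precisely the rational function Theorem~\ref{T:drin1} assigns to the polynomial $P_iQ_i$. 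Reading off its zeros and poles recovers $P_iQ_i$ uniquely, so the Drinfeld tuple of $\overline W$ is forced to be $\mathbf P\otimes\mathbf Q$. The matching of the series $\Phi_i^{-}$ is then automatic once $\overline W$ is known to be a finite dimensional irreducible; if one prefers to verify it by hand, the degree-dependent factor $(rs)^{\frac{1-\epsilon}{2}\deg P_i}$ in Theorem~\ref{T:drin1} is reconciled by applying the rescaling automorphism $\Gamma_2$ of Subsection~2.3 to one of the tensor factors.

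The step I expect to be the real obstacle is the coproduct bookkeeping underlying (i) and (ii): Proposition~\ref{hopf} supplies $\Delta$ only on $e_i,f_i,\om_i^{\pm1}$, whereas the argument needs the ``upper/lower triangular'' form of $\Delta$ on $x_i^{\pm}(k)$ and on $\om_i(m),\om'_i(-m)$. This is the two-parameter counterpart of a standard but non-trivial computation in the one-parameter theory; the extra structural constants $\langle i,j\rangle$ introduced by the two-parameter twisting enter only as scalar coefficients and do not disturb the triangular shape, so the one-parameter argument adapts, but carrying it out carefully (or extracting it from \cite{HRZ}, \cite{HZ1}, \cite{HZ2}) is where the work lies. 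Everything afterwards --- finite dimensionality of $W$, existence and uniqueness of its irreducible quotient, and the identification of that quotient with $V(\mathbf P\otimes\mathbf Q)$ --- is formal.
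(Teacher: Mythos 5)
Your proposal follows the same route as the paper's own (very terse) proof: one checks that $v_{\bf P}\otimes v_{\bf Q}$ is a highest weight vector whose eigenvalue series $\Phi^{\pm}_i$ are the products of those of the two factors, and then identifies the irreducible quotient of the cyclic submodule it generates with $V({\bf P}\otimes{\bf Q})$ via Theorem \ref{T:drin1}. The paper merely asserts these facts and calls the conclusion a ``direct consequence,'' so your write-up is a correct, more complete rendering of the same argument, with the coproduct bookkeeping on the Drinfeld generators rightly flagged as the only nontrivial step.
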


\begin{proof}\, This proof is almost the same as one-parameter case. By abuse of notation, if ${\bf P}=(P_i)_{i\in I}$, ${\bf Q}=(Q_i)_{i\in I}$,
we denote $\bf{P}\otimes \bf{Q}\in \mathcal{P}$ be the $I$-tuple $(P_iQ_i)_{i\in I}$. Then in
$V(\bf{P})\otimes V(\bf{Q})$, for all $i\in I$, $k,\,m\in \mathbb{Z}$, we have
$$x_{i}^+(k)\cdot (v_{\bf {P}}\otimes v_{\bf {Q}})=0,
\quad \omega_i(m)\cdot (v_{\bf {P}}\otimes v_{\bf {Q}})=\Phi_{i\,m}^{+} (v_{\bf {P}}\otimes v_{\bf {Q}}),$$
$$\omega'_i(m)\cdot (v_{\bf {P}}\otimes v_{\bf {Q}})=\Phi_{i\,m}^{-} (v_{\bf {P}}\otimes v_{\bf {Q}}).$$
Thus it is an direct consequence of the above arguments.
\end{proof}

The following result is an immediate consequence of Theorem \ref{T:tensor}.
\begin{coro}
Any finite dimensional irreducible module of $U_{r,
s}(\widehat{\mathfrak{g}})$ of type 1 is isomorphic to an quotient of
the submodule of a tensor product of fundamental representations.
\end{coro}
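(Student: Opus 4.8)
The plan is to deduce this directly from Theorem~\ref{T:tensor} together with the bijection between isomorphism classes of finite-dimensional irreducible type~1 modules and elements of $\mathcal{P}$, plus the observation from the Remark that the fundamental representations $V_{w_i}(a)$ correspond to the ``linear'' tuples $P_a^{(i)}$. First I would take an arbitrary finite-dimensional irreducible type~1 module $V$ of $U_{r,s}(\widehat{\mathfrak g})$; by Proposition~3.8 and the discussion following Theorem~\ref{T:drin1}, $V\cong V(\mathbf{P})$ for a unique $\mathbf{P}=(P_i)_{i\in I}\in\mathcal{P}$. Since each $P_i\in\mathbb{C}[u]$ has constant term~$1$, over $\mathbb{C}$ it factors as $P_i(u)=\prod_{t}(1-a_{i,t}u)$ for suitable $a_{i,t}\in\mathbb{C}^*$. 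Hence $\mathbf{P}=\bigotimes_{i,t}P_{a_{i,t}}^{(i)}$, i.e.\ $\mathbf{P}$ is a finite ``product'' (componentwise product of polynomials, which is the $\otimes$-operation on $\mathcal{P}$) of the linear tuples defining the fundamental representations.

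Next I would apply Theorem~\ref{T:tensor} inductively. For two factors, Theorem~\ref{T:tensor} says exactly that $V(\mathbf{P}\otimes\mathbf{Q})$ is a quotient of the submodule of $V(\mathbf{P})\otimes V(\mathbf{Q})$ generated by $v_{\mathbf P}\otimes v_{\mathbf Q}$; iterating, $V(\mathbf{P})$ with $\mathbf{P}=\bigotimes_{i,t}P_{a_{i,t}}^{(i)}$ is a quotient of the submodule of $\bigotimes_{i,t}V_{w_i}(a_{i,t})$ generated by the tensor product of the highest weight vectors. This exhibits $V\cong V(\mathbf P)$ as a quotient of a submodule of a tensor product of fundamental representations, which is the assertion. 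One small point to check is that the associativity of the $\otimes$-operation on $\mathcal{P}$ (obvious, since it is just multiplication of polynomials in each component) matches a compatible bracketing of the iterated tensor product of modules, so that the inductive application of Theorem~\ref{T:tensor} is legitimate; this is routine.

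The only genuine content beyond bookkeeping is already packaged in Theorem~\ref{T:tensor}, so there is no serious obstacle. If anything, the step requiring a little care is verifying that in $V(\mathbf P)\otimes V(\mathbf Q)$ the vector $v_{\mathbf P}\otimes v_{\mathbf Q}$ is genuinely a highest weight vector with Drinfeld data corresponding to $\mathbf P\otimes\mathbf Q$ — but this is precisely the computation carried out in the proof of Theorem~\ref{T:tensor}, using that the coproduct on the Drinfeld generators $x_i^+(k)$ and $\omega_i^{(\prime)}(m)$ has the triangular form making $e_i$-type and Cartan-type actions on the tensor of highest weight vectors behave as in the classical and one-parameter cases. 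Hence the Corollary follows immediately.
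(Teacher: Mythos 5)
Your argument is correct and is exactly the one the paper intends: the paper simply states the corollary as an immediate consequence of Theorem~\ref{T:tensor}, and your factorization of each $P_i$ into linear factors $(1-a_{i,t}u)$ followed by an inductive application of Theorem~\ref{T:tensor} is the standard way to fill in that step. Nothing is missing.
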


\noindent {\bf 3.3}\, Similar to the one-parameter case, for any
representation $V$ of $U_{r, s}(\widehat{\mathfrak{g}})$ , we can
decompose $V$ into a direct sum $V=\oplus V_{\gamma_{i,m}^{\pm}}$,
where $$V_{\gamma_{i,m}^{\pm}}=\{x\in V|(\Phi_{i,\pm
m}^{\pm}-{\gamma_{i,m}^{\pm}})^p\cdot x=0, \hbox{for some p},
\forall i, \, m\}.$$

Given a collection $({\gamma_{i,m}^{\pm}})$ of eigenvalues, we form
the generating functions
$$\gamma_{i}^{\pm}(u)=\sum\limits_{m>0}\gamma_{i,\pm m}^{\pm}u^{\pm m}.$$

The following result generalizes the corresponding result for one-parameter cases
\cite{FR} and also generalizes Theorem \ref{T:drin1}.

\begin{prop}\, The generating functions $\gamma_{i}^{\pm}(u)$ of eigenvalues  on any finite
dimensional representation of $U_{r, s}(\widehat{\mathfrak{g}})$ have
the form
$$\gamma_{i}^{\pm}(u)=r_i^{deg R_i-\frac{deg Q_i}{2}}s_i^{\frac{deg Q_i}{2}}\frac{R_i(us_i)Q_i(ur_i)}{R_i(ur_i)Q_i(us_i)}$$
as elements of $\mathbb{C}[[u]]$ and $\mathbb{C}[[u^{-1}]]$,
respectively, where $Q_i(u), R_i(u)$ are polynomials in $u$.
\end{prop}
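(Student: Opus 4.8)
The plan is to follow the one-parameter argument of Frenkel--Reshetikhin \cite{FR}, reducing first to $\widehat{\mathfrak{sl}_2}$ and then to tensor products of the two-dimensional fundamental modules. The first step is the reduction to a single node: for a fixed $i\in I$, inspection of $(\mathrm{D}1)$--$(\mathrm{D}8)$ shows that the Drinfeld generators $x_i^{\pm}(k)$, $a_i(\ell)$, $\om_i^{\pm1}$, ${\om_i'}^{\pm1}$, $\gamma^{\pm\frac12}$, ${\gamma'}^{\pm\frac12}$ attached to that one index span a copy of $\mathcal U_{r_i,s_i}(\widehat{\mathfrak{sl}_2})$ inside $\mathcal U_{r,s}(\widehat{\mathfrak g})$, and the numbers $\gamma_{i,m}^{\pm}$ are by definition the eigenvalues of $\om_i(m),\om'_i(-m)$, which lie in that copy; so it suffices to treat $U_{r,s}(\widehat{\mathfrak{sl}_2})$. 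On a type $1$ finite dimensional module the operators $\{\om_i(m),\om'_i(-m):m\ge0\}$ commute (by $(\mathrm{D}2)$--$(\mathrm{D}3)$, since $\gamma=\gamma'=1$), so their joint generalized eigenspaces are additive along short exact sequences; hence the set of generating functions $\gamma_i^{\pm}(u)$ occurring in a module equals the union of those occurring in its composition factors. One may assume $V$ is of type $1$ by Proposition~3.1, and then every composition factor is again of type $1$ and, by Theorem~\ref{T:drin1} together with the classification of type $1$ irreducibles stated just after it, is of the form $V(\mathbf P)$ with $\mathbf P\in\mathcal P$. So it is enough to prove the assertion for $V=V(\mathbf P)$.

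The second step uses the tensor structure. By the Corollary following Theorem~\ref{T:tensor}, $V(\mathbf P)$ is a quotient of a submodule of a tensor product $V_{w_1}(a_1)\ot\cdots\ot V_{w_1}(a_N)$ of fundamental representations, and for $\widehat{\mathfrak{sl}_2}$ every $V_{w_1}(a)$ is a two-dimensional evaluation module. Since passing to a submodule, and then to a quotient, can only delete $\ell$-weights, it is enough to determine the $\ell$-weights of $V_{w_1}(a_1)\ot\cdots\ot V_{w_1}(a_N)$. For a single factor $V_{w_1}(a)$ this is immediate: its highest $\ell$-weight is given by Theorem~\ref{T:drin1} applied to $P_a(u)=1-au$, hence is of the asserted form with $\deg R=1$, $\deg Q=0$; and the unique remaining $\ell$-weight, carried by the line $x_i^-(0)v^+$, is computed directly from $(\mathrm{D}4)$ and $(\mathrm{D}7)$ and is again of the asserted form, now with a nontrivial denominator. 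Finally, a finite product of functions of the form $r^{\deg R-\frac{\deg Q}{2}}s^{\frac{\deg Q}{2}}\dfrac{R(us)Q(ur)}{R(ur)Q(us)}$ is of the same form, with $R$ (resp.\ $Q$) the product of the individual numerators (resp.\ denominators) and the prefactor multiplicative. Thus everything reduces to the statement that every $\ell$-weight of the tensor product is the product of one $\ell$-weight taken from each factor.

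That multiplicativity is the genuinely delicate point, and the main obstacle I foresee. The series $\om_i^{+}(z)=\sum_{m\ge0}\om_i(m)z^{-m}$ and $\om_i^{-}(z)=\sum_{m\ge0}\om'_i(-m)z^{m}$ are not group-like, so this does not drop out of the coproduct of Proposition~\ref{hopf} by inspection. The plan is to show, as in \cite{FR}, that on $V\ot W$ the joint generalized eigenspace for $\{\om_i(m),\om'_i(-m)\}$ lies in $\bigoplus_{\mu,\nu}V_\mu\ot W_\nu$ and that $\Delta(\om_i^{\pm}(z))$ acts on each $V_\mu\ot W_\nu$ with ``diagonal part'' $\om_i^{\pm}(z)\ot\om_i^{\pm}(z)$, the remaining terms strictly lowering a suitable weight filtration; a triangularity argument then forces the generalized eigenvalue of $\om_i^{\pm}(z)$ on $V\ot W$ to be the product of those on $V$ and $W$. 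Concretely one needs the congruence $\Delta(\om_i^{\pm}(z))\equiv\om_i^{\pm}(z)\ot\om_i^{\pm}(z)$ modulo the left ideal generated by the $x_j^{\pm}(k)$, obtained by transporting the Chevalley coproduct of Proposition~\ref{hopf} to the Drinfeld generators; carrying this out in the two-parameter setting is the crux, with everything else above amounting to bookkeeping. Assembling the steps yields the proposition, and specializing to $V=V(\mathbf P)$ at its highest $\ell$-weight recovers Theorem~\ref{T:drin1} as the case $Q_i=1$, $R_i=P_i$.
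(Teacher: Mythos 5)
Your overall strategy is the standard Frenkel--Reshetikhin one, and the reductions you perform (to a single node, to type $1$, to composition factors, to a subquotient of a tensor product of fundamental modules, and the closure of the asserted form under products) are all sound. But the proof is not complete: the step you yourself single out as ``the crux'' --- that every $\ell$-weight of $V\ot W$ is the product of an $\ell$-weight of $V$ and one of $W$, resting on the congruence $\Delta(\om_i^{\pm}(z))\equiv\om_i^{\pm}(z)\ot\om_i^{\pm}(z)$ modulo a suitable left ideal in the Drinfeld generators --- is announced as a plan but never established. This is a genuine gap, not bookkeeping: the paper only records the coproduct on the Chevalley generators (Proposition \ref{hopf}), and transporting it through the Drinfeld isomorphism to obtain the triangularity of $\Delta$ on the currents $\om_i^{\pm}(z)$ is a substantial computation (in the one-parameter case this is a theorem of Damiani and Chari--Pressley, not an observation), and it is exactly where the two-parameter structure constants could in principle interfere. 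Without it, your argument pins down only the highest $\ell$-weight of the tensor product, not the full spectrum of the commuting family $\{\om_i(m),\,\om'_i(-m)\}$, which is what the proposition is about.

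For comparison, the paper sidesteps this lemma: after the same reduction to the $i$th copy of $U_{r,s}(\widehat{\mathfrak{sl}}_2)$, it verifies the asserted form by directly computing the action of $\om(k)$ and $\om'(-k)$ on every weight line $v_j$ of the $(n+1)$-dimensional evaluation module $W_n(a)$ in Section 4, which produces the explicit polynomials $Q_i$ and $R_i$. That route trades your coproduct congruence for an explicit eigenvalue computation on evaluation modules (together with the fact that for $\widehat{\mathfrak{sl}}_2$ these account for the irreducibles up to tensor products, where the same multiplicativity issue quietly reappears). To complete your version you must either prove the two-parameter analogue of the Drinfeld-coproduct triangularity, or replace your two-dimensional building blocks by the paper's direct computation on $W_n(a)$.
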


\begin{proof} The case of $U_{r,\,s}(\hat{\mathfrak{sl}_2})$
will be discussed in next section. For the general case,
one can view the module as a finite dimensional
module for each $U_{r,\,s}^{(i)}(\hat{\mathfrak{sl}_2})$,
the $i$th copy of the subalgebra of $U_{r,\,s}^{(i)}(\hat{\mathfrak{g}})$.
One sees that the generating function $\gamma_{i}^{\pm}(u)$
must be of the stated form.
\end{proof}

\section{The case of  $U_{r, s}(\widehat{\mathfrak{sl}_2})$}
\smallskip

\noindent {\bf 4.1}\,  We recall the evaluation map of the
two-parameter quantum
 affine algebra $U_{r, s}(\widehat{\mathfrak{sl}_2})$ in this subsection and give necessary computations
 (cf. \cite{ZP}). As in the one-parameter case, $U_{r, s}(\widehat{\mathfrak{sl}_2})$ plays
 a similar role for the general theory. This is in alignment with the general principle
 that $sl(2)$ effectively controls the general Lie theory. For this reason, we will give
 detailed computation for the case of $sl(2)$. Our computation reconfirms
 this principle for the general two-parameter quantum affine algebras.

\begin{prop} For any $a\in \mathbb{C}^*$, there exists an algebra morphism $\mathrm{ev}_a$ from
$U_{r,s}(\widehat{\mathfrak{sl}_2})$  to $U_{r,s}(\mathfrak{sl}_2)$ defined
as follows:
$$\mathrm{ev}_a(e_0)=r^{-1}sa\, f,\quad \mathrm{ev}_a(f_0)=rs^{-1}a^{-1}\,e,\quad \mathrm{ev}_a(e_1)=e,\quad \mathrm{ev}_a(f_1)=f,$$
$$\mathrm{ev}_a(\om_0)=\om',\quad \mathrm{ev}_a(\om_1)=\om,\quad \mathrm{ev}_a(\om'_0)=\om,\quad \mathrm{ev}_a(\om'_1)=\om'.$$
\end{prop}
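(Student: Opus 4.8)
The plan is to verify directly that the proposed assignments respect every defining relation of $U_{r,s}(\widehat{\mathfrak{sl}_2})$ in its Drinfeld--Jimbo presentation, namely $(\hat{R}1)$--$(\hat{R}4)$ of Definition~2.1 specialized to type $\mathrm{A}_1^{(1)}$; since that presentation is by generators and relations, such a check yields a unique algebra homomorphism. For type $\mathrm{A}_1^{(1)}$ one has $a_{01}=a_{10}=-2$, hence $1-a_{ij}=3$, and the structure constants from the $n=1$ matrix $J$ are $\langle 0,0\rangle=\langle 1,1\rangle=rs^{-1}$ and $\langle 0,1\rangle=\langle 1,0\rangle=r^{-1}s=\langle 1,1\rangle^{-1}$. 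I also set $\mathrm{ev}_a(\gamma^{\pm1/2})=\mathrm{ev}_a(\gamma'^{\pm1/2})=1$. Then $(\hat{R}1)$ is immediate: $1$ is central, the relation $\gamma\gamma'=(rs)^c$ holds at central charge $c=0$, and $\omega,\omega'$ already commute in $U_{r,s}(\mathfrak{sl}_2)$, so all images of the $\omega$'s commute.

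Next I would treat $(\hat{R}2)$ and $(\hat{R}3)$. Under $(e_1,f_1,\omega_1,\omega_1')\mapsto(e,f,\omega,\omega')$ the index pair $i=j=1$ of $(\hat{R}2)$ and of $(\hat{R}3)$ is literally the set of Cartan relations defining $U_{r,s}(\mathfrak{sl}_2)$. For the other pairs one substitutes $(e_0,f_0,\omega_0,\omega_0')\mapsto(r^{-1}sa\,f,\ rs^{-1}a^{-1}e,\ \omega',\ \omega)$: for instance $\omega_0e_0\omega_0^{-1}=\langle 0,0\rangle e_0$ becomes $\omega'f\omega'^{-1}=rs^{-1}f$, which is the finite relation $\omega_1'f_1\omega_1'^{-1}=\langle 1,1\rangle f_1$ because $\langle 0,0\rangle=\langle 1,1\rangle$; the mixed conjugation relations match the same way using $\langle 0,1\rangle=\langle 1,0\rangle=\langle 1,1\rangle^{-1}$. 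For $(\hat{R}3)$ the off-diagonal cases map to $[r^{-1}sa\,f,f]=0$ and $[e,rs^{-1}a^{-1}e]=0$, and the $i=j=0$ case maps to $(r^{-1}sa)(rs^{-1}a^{-1})[f,e]=[f,e]=-\tfrac{1}{r-s}(\omega-\omega')=\tfrac{1}{r-s}(\omega'-\omega)=\tfrac{1}{r-s}(\omega_0-\omega_0')$, the scalars cancelling exactly as the normalization was designed to make them.

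The hard part is $(\hat{R}4)$, the four cubic $(r,s)$-Serre relations, precisely because $U_{r,s}(\mathfrak{sl}_2)$ has rank one and so has no Serre relation of its own to quote. The point to watch is that $\mathrm{ad}_l e_0$ and $\mathrm{ad}_r f_0$ are built from $\Delta(e_0)=e_0\otimes 1+\omega_0\otimes e_0$ and $\Delta(f_0)=1\otimes f_0+f_0\otimes\omega_0'$, so under $\mathrm{ev}_a$ one does \emph{not} obtain the finite operators $\mathrm{ad}_l f$, $\mathrm{ad}_r e$ but twisted ones such as $B\mapsto r^{-1}sa\,\bigl(fB-(\omega'B\omega'^{-1})f\bigr)$. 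Expanding the Serre relations, applying $\mathrm{ev}_a$, and cancelling the nonzero scalars, they reduce to identities in $U_{r,s}(\mathfrak{sl}_2)$ of the form $\sum_{k=0}^{3}(-1)^k c_k\,f^{k}\,e\,f^{\,3-k}=0$ together with its $e\leftrightarrow f$ mirror (the mirror coming from $(\mathrm{ad}_l e_1)^3(e_0)$ and $(\mathrm{ad}_r f_0)^3(f_1)$ once $f_0\mapsto rs^{-1}a^{-1}e$, $f_1\mapsto f$), where the $c_k\in\mathbb{K}$ are products of powers of $rs$ with the Gaussian binomials $\binom{3}{k}$ formed from $[n]=\frac{r^n-s^n}{r-s}$. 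I expect this to be the only genuine computation: one moves every $e$ past the $f$'s using $[e,f]=\frac{\omega-\omega'}{r-s}$ together with $\omega^{\pm1}f=\langle 1,1\rangle^{\mp1}f\omega^{\pm1}$ and $\omega'^{\pm1}f=\langle 1,1\rangle^{\pm1}f\omega'^{\pm1}$, and then checks that the coefficients of the surviving monomials $f^{2}$ and $f^{2}\omega\omega'^{-1}$ are Gaussian-binomial sums that vanish --- this is exactly the $(r,s)$-deformation of the classical fact $[f,[f,[f,e]]]=0$ in $\mathfrak{sl}_2$. By the anti-automorphism of $U_{r,s}(\mathfrak{sl}_2)$ exchanging $e$ and $f$, the mirror identity follows from the first, so a single such calculation closes the proof; alternatively one may simply cite the verification carried out in \cite{ZP}.
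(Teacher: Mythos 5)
Your direct verification of the Drinfeld--Jimbo relations $(\hat{R}1)$--$(\hat{R}4)$ is correct and is essentially the argument the paper relies on: the paper states the proposition without proof, deferring to [ZP], where it is established by exactly this kind of generators-and-relations check, and your bookkeeping of the structure constants ($\langle 0,0\rangle=\langle 1,1\rangle=rs^{-1}$, $\langle 0,1\rangle=\langle 1,0\rangle=r^{-1}s$) and of the sign/scalar cancellation in $(\hat{R}3)$ is accurate. The one step you leave as an expectation --- the vanishing of the evaluated $(r,s)$-Serre relations --- does close: with $p=r^{-1}s$ and $T(B)=fB-\omega'B\omega'^{-1}f$ one finds $T^3(e)=f^3e-(1+p+p^{-1})(f^2ef-fef^2)-ef^3$, and commuting $e$ through the powers of $f$ via $[e,f]=\frac{\omega-\omega'}{r-s}$ leaves only the monomials $f^2\omega$ and $f^2\omega'$ (rather than $f^2$ and $f^2\omega\omega'^{-1}$ as you wrote, a harmless slip), whose coefficients are $-(1+p+p^2)+p(1+p+p^{-1})=0$ and its reciprocal analogue.
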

\smallskip

Using the Drinfeld isomorphism theorem, we can lift the
evaluation morphism  $\mathrm{ev}_a$ to its Drinfeld realization
$\mathcal{U}_{r,s}(\widehat{\mathfrak{sl}_2})$, we also denote it by
$\mathrm{ev}_a$ without ambiguity.
\begin{prop}\label{EV1}\, There exists an algebra morphism $\mathrm{ev}_a$ from
$\mathcal{U}_{r,s}(\widehat{\mathfrak{sl}_2})$  to
$U_{r,s}(\mathfrak{sl}_2)$ defined as follows:
$$\mathrm{ev}_a(\gamma)=1=\mathrm{ev}_a(\gamma'),\,\quad \mathrm{ev}_a(\om)=\om,
\quad \mathrm{ev}_a(\om')=\om',$$
$$\mathrm{ev}_a(x^+(k))=r^{-k}s^ka^k\om'^{-k} e,\quad \mathrm{ev}_a(x^-(k))=r^{-k}s^ka^kf\om^{k} .$$
\end{prop}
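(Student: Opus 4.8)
By construction the map $\mathrm{ev}_a$ on $\mathcal{U}_{r,s}(\widehat{\mathfrak{sl}_2})$ is the composite of the evaluation morphism of the previous proposition with the Drinfeld isomorphism $\Phi\colon\mathcal{U}_{r,s}(\widehat{\mathfrak{sl}_2})\to U_{r,s}(\widehat{\mathfrak{sl}_2})$ of \cite{HRZ} (see also \cite{HZ1,HZ2}). As a composition of algebra homomorphisms it is automatically an algebra morphism $\mathcal{U}_{r,s}(\widehat{\mathfrak{sl}_2})\to U_{r,s}(\mathfrak{sl}_2)$, so no defining relation $(\mathrm{D}1)$--$(\mathrm{D}8)$ has to be re-checked; the whole content of the statement is the identification of $\mathrm{ev}_a\circ\Phi$ on the Drinfeld generators with the displayed formulas, and that is what I would carry out.

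First the easy generators. The evaluation representation has level $0$, so $\mathrm{ev}_a(\gamma^{\pm1/2})=\mathrm{ev}_a(\gamma'^{\pm1/2})=1$; and $\Phi$ fixes the finite torus, $\Phi(\om)=\om_1$, $\Phi(\om')=\om_1'$, hence $\mathrm{ev}_a(\om)=\om$, $\mathrm{ev}_a(\om')=\om'$. Also $\Phi(x^+(0))=e_1$ and $\Phi(x^-(0))=f_1$, which gives $\mathrm{ev}_a(x^+(0))=e$ and $\mathrm{ev}_a(x^-(0))=f$, the case $k=0$ of the two formulas. The Drinfeld isomorphism expresses the remaining affine generators $x^-(1)$ and $x^+(-1)$ through $e_0$ and $f_0$ with torus prefactors in $\om_0^{\pm1},\om_1^{\pm1},\om_0'^{\pm1},\om_1'^{\pm1}$ and a scalar in $r,s$; inserting $\mathrm{ev}_a(e_0)=r^{-1}sa\,f$, $\mathrm{ev}_a(f_0)=rs^{-1}a^{-1}e$, $\mathrm{ev}_a(\om_0)=\om'$, $\mathrm{ev}_a(\om_1)=\om$, $\mathrm{ev}_a(\om_0')=\om$, $\mathrm{ev}_a(\om_1')=\om'$, and then commuting the torus elements past $e$ and $f$ (all such moves produce only powers of $\langle1,1\rangle=rs^{-1}$), normalizes $\mathrm{ev}_a(x^-(1))$ to $r^{-1}sa\,f\om$ and $\mathrm{ev}_a(x^+(-1))$ to $rs^{-1}a^{-1}\om'e$, i.e.\ the cases $k=\pm1$.

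For arbitrary $k$ I would induct on $|k|$ using $(\mathrm{D}5_1)$ and $(\mathrm{D}5_2)$: after specializing $\gamma,\gamma'$ to $1$, the bracket $[\,a(\ell),x^\pm(k)\,]$ is an explicit nonzero scalar multiple of $x^\pm(k+\ell)$, and with $\ell=\pm1$ this propagates the $k=\pm1$ formulas to all $k$. This needs $\mathrm{ev}_a(a(\pm1))$, hence $\mathrm{ev}_a(\om(1))$ and $\mathrm{ev}_a(\om'(-1))$, which are read off from $(\mathrm{D}7)$ with $k+k'=\pm1$ applied to the generators already in hand, together with the exponential generating functions defining $\om(m)$ and $\om'(-m)$. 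One then checks that, evaluated on the proposed formulas, $[\,a(1),x^+(k)\,]$ has precisely the coefficient of $x^+(k+1)$ predicted by $(\mathrm{D}5_1)$ (and similarly for the other signs); this both closes the induction and establishes $\mathrm{ev}_a(x^\pm(k))=r^{-k}s^ka^k\om'^{-k}e$, resp.\ $r^{-k}s^ka^kf\om^k$, for all $k\in\mathbb{Z}$.

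The main obstacle is the bookkeeping around the quantum Heisenberg subalgebra: one must pin down $\mathrm{ev}_a(a(\ell))$ -- equivalently the image under $\mathrm{ev}_a$ of the series $\sum_{m\ge0}\om(m)z^{-m}$ and $\sum_{m\ge0}\om'(-m)z^m$, which is a rational expression in $z$ with coefficients in $U_{r,s}(\mathfrak{sl}_2)$, the two-parameter analogue of the classical evaluation -- and keep track of the asymmetric powers of $r,s$ and $(rs)^{1/2}$ occurring in $(\mathrm{D}2)$, $(\mathrm{D}5)$ and $(\mathrm{D}7)$. If one prefers to avoid $\Phi$ altogether, the same formulas, supplemented by the value of $\mathrm{ev}_a(a(\ell))$ just described, can be verified to satisfy $(\mathrm{D}1)$--$(\mathrm{D}8)$ relation by relation: $(\mathrm{D}8)$ is vacuous because $\mathfrak{sl}_2$ has a single simple root, $(\mathrm{D}3)$, $(\mathrm{D}4)$ and $(\mathrm{D}6)$ follow from the $\om,\om'$-weight structure (in particular $ef$ and $fe$ are $\om$- and $\om'$-invariant in $U_{r,s}(\mathfrak{sl}_2)$) and short monomial manipulations, and only $(\mathrm{D}2)$, $(\mathrm{D}5)$, $(\mathrm{D}7)$ require the Heisenberg computation above.
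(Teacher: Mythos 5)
Your proposal takes the same route the paper does: the paper offers no written proof of this proposition at all, only the preceding sentence ``Using the Drinfeld isomorphism theorem, we can lift the evaluation morphism $\mathrm{ev}_a$ to its Drinfeld realization'' (with the result recalled from \cite{ZP}), and your argument --- compose the Chevalley-presentation evaluation map of the previous proposition with the Drinfeld isomorphism, so that no relation (D1)--(D8) needs independent checking, then identify the images of the Drinfeld generators by starting from $k=0,\pm1$ and propagating via $(\mathrm{D}5_{1,2})$ and the Heisenberg elements --- is exactly that plan, filled in with more detail than the paper itself provides. The only caveat is that your write-up remains a sketch (the induction on $|k|$ and the computation of $\mathrm{ev}_a(a(\pm1))$ are described rather than executed), but the strategy is sound and is at least as complete as the paper's own treatment.
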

\smallskip

\smallskip

Similar to the one-parameter case, we can define the evaluation
representation of $\mathcal{U}_{r,s}(\widehat{\mathfrak{sl}_2})$ as
follows:

\begin{defi}
For $a\in \mathbb{C}*$ and $n\in \mathbb{N}$, we call these $V_n(a)$
evaluation representations of quantum affine algebra
$\mathcal{U}_{r,s}(\widehat{\mathfrak{sl}_2})$.
\end{defi}
\smallskip

We recall the representation of $U_{r,s}({\mathfrak{sl}_2})$, see
\cite{BW2} and \cite{BGH2} for more detail.  For the representation
$V_n$ of $U_{r,s}({\mathfrak{sl}_2})$ of dimension n+1, there exists a
basis $v_0,\, v_1,\, \cdots,\, v_n$ of $V_n$ such that $$\om \cdot
v_i=r^n(rs^{-1})^{-i}v_i,\qquad \om' \cdot
v_i=s^n(rs^{-1})^{i}v_i,$$
$$ e \cdot
v_i=[n+1-i]v_{i-1},\qquad f \cdot v_i=[i+1]v_{i+1}.$$
\smallskip

\begin{prop}
As a vector space, $V_n(a)$ is equal to $V_n$, and the action of
$\mathcal{U}_{r,s}(\widehat{\mathfrak{sl}_2})$  is given by:
$$x^+(k)\cdot v_i=a^ks^{-nk}(rs^{-1})^{-ki}[n+1-i]\, v_{i-1},$$
$$x^-(k)\cdot v_i=a^kr^{nk}(rs^{-1})^{-k(i+1)}[i+1]\, v_{i+1}.$$
\end{prop}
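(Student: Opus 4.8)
The plan is to compute the action of the Drinfeld generators $x^\pm(k)$ on the basis $v_0,\dots,v_n$ of $V_n$ by pulling back along the evaluation morphism $\mathrm{ev}_a$ of Proposition~\ref{EV1} and using the explicit $U_{r,s}(\mathfrak{sl}_2)$-action on $V_n$ recalled just above. Concretely, $x^+(k)\cdot v_i = \mathrm{ev}_a(x^+(k))\cdot v_i = r^{-k}s^k a^k\,{\om'}^{-k}e\cdot v_i$. First I would apply $e$: by the recalled formulas $e\cdot v_i=[n+1-i]v_{i-1}$. Then I would apply ${\om'}^{-k}$ to $v_{i-1}$: since $\om'\cdot v_j = s^n(rs^{-1})^{j}v_j$, we get ${\om'}^{-k}\cdot v_{i-1} = s^{-nk}(rs^{-1})^{-k(i-1)}v_{i-1}$. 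Multiplying the scalars $r^{-k}s^k a^k \cdot s^{-nk}(rs^{-1})^{-k(i-1)}$ and simplifying, the factor $r^{-k}s^k\cdot(rs^{-1})^{k}$ combines to $(rs^{-1})^{k}\cdot r^{-k}s^{k}=(rs^{-1})^{k}(rs^{-1})^{-k}=1$... wait, one must track this carefully: $r^{-k}s^{k}=(rs^{-1})^{-k}$, so $r^{-k}s^k\cdot(rs^{-1})^{-k(i-1)}=(rs^{-1})^{-k}\cdot(rs^{-1})^{-k(i-1)}=(rs^{-1})^{-ki}$, leaving $a^k s^{-nk}(rs^{-1})^{-ki}[n+1-i]$, which is exactly the claimed coefficient of $v_{i-1}$.

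For the negative generator, $x^-(k)\cdot v_i = \mathrm{ev}_a(x^-(k))\cdot v_i = r^{-k}s^k a^k\, f\,\om^{k}\cdot v_i$. Here the order matters: first apply $\om^{k}$, getting $\om^{k}\cdot v_i = r^{nk}(rs^{-1})^{-ki}v_i$, then apply $f$, getting $f\cdot v_i=[i+1]v_{i+1}$, so altogether $r^{-k}s^k a^k\cdot r^{nk}(rs^{-1})^{-ki}[i+1]\,v_{i+1}$. Again rewriting $r^{-k}s^{k}=(rs^{-1})^{-k}$ and combining with $(rs^{-1})^{-ki}$ produces $(rs^{-1})^{-k(i+1)}$, so the coefficient becomes $a^k r^{nk}(rs^{-1})^{-k(i+1)}[i+1]$, matching the statement.

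The only genuinely delicate point is bookkeeping of the powers of $rs^{-1}$ and making sure the operators are applied in the correct order (note $\mathrm{ev}_a(x^+(k))$ has $\om'^{-k}$ on the \emph{left} of $e$, while $\mathrm{ev}_a(x^-(k))$ has $\om^k$ on the \emph{right} of $f$). Since $V_n(a)$ is defined to equal $V_n$ as a vector space with action pulled back through $\mathrm{ev}_a$, nothing further is needed — once the two coefficient computations check out, the proposition follows. I expect the main (very minor) obstacle to be consistency of conventions for $\mathrm{ev}_a$ and for the $U_{r,s}(\mathfrak{sl}_2)$-module $V_n$, i.e. confirming that the Cartan eigenvalues $\om\cdot v_i=r^n(rs^{-1})^{-i}v_i$, $\om'\cdot v_i=s^n(rs^{-1})^{i}v_i$ are compatible with the evaluation images $\mathrm{ev}_a(\om)=\om$, $\mathrm{ev}_a(\om')=\om'$; granting the earlier propositions this is immediate.
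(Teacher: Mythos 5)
Your computation is correct and is exactly the intended argument: the paper states this proposition without proof, but the evaluation morphism of Proposition~\ref{EV1} and the explicit $U_{r,s}(\mathfrak{sl}_2)$-action on $V_n$ are recalled immediately beforehand precisely so that one pulls back the action of $x^\pm(k)$ through $\mathrm{ev}_a$ as you do. Your bookkeeping of the operator order ($e$ before ${\om'}^{-k}$, $\om^{k}$ before $f$) and of the powers of $rs^{-1}$ checks out and reproduces both stated coefficients.
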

\medskip

A vector $v\in V$ is a highest weight vector if for $l\in
\mathbb{Z},\, k\in \mathbb{Z}_{\geqslant 0}$, we have
$$x^+(l)\cdot v=0,\quad a(k)\cdot v=d_k^+ v,\quad a(-k)\cdot v=d_{-k}^- v, \quad \gamma \cdot v=\gamma'\cdot v=1$$
for some complex numbers $\Phi_k^+$ and $\Phi_{-k}^-$. Note that
$\Phi_0^+\Phi_0^-$ is the powers of (rs). We will show that
evaluation representations $V_n(a)$ for $a\in \mathbb{C}^*$ are
highest weight representations.

It is easy to see from the above proposition that $v_0$ is
annihilated by all $x^+(k)$. The actions of $\om_0$ and $\om'_0$ can
be easily computed
$$\om_0\cdot v_0= r^{n}v_0,\qquad \om'_0\cdot v_0= s^{n}v_0.$$

In general, we have

\begin{prop}

All finite dimensional irreducible representations of
$U_{r,s}({\mathfrak{sl}_2})$ are highest weight representations.
\end{prop}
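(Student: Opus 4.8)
The plan is to argue that every finite dimensional irreducible representation of $U_{r,s}(\mathfrak{sl}_2)$ is generated by a highest weight vector, exactly as in the one-parameter case but keeping track of the second parameter. Let $V$ be such a module. First I would invoke the fact (noted already in Section 3.1, following \cite{BW2,BGH2}) that $V$, possibly after twisting by an automorphism $a_\sigma$ of Proposition~2.5, is of type~1, hence decomposes as $V=\bigoplus_{\lambda} V_\lambda$ into weight spaces for the commuting operators $\omega,\omega'$. Since $V$ is finite dimensional, the set of weights $\mathrm{wt}(V)\subset P$ is finite; because $e$ raises weights by $\alpha$ (the positive root) and $f$ lowers them, one picks a weight $\lambda$ that is maximal in the sense that $\lambda+\alpha\notin\mathrm{wt}(V)$. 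Then any nonzero $v\in V_\lambda$ satisfies $e\cdot v\in V_{\lambda+\alpha}=0$, so $v$ is a highest weight vector.

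Next I would show $U_{r,s}(\mathfrak{sl}_2)\cdot v = V$. By the triangular decomposition $U_{r,s}(\mathfrak{sl}_2)=U^-\otimes U^0\otimes U^+$ and $e\cdot v=0$, we have $U_{r,s}(\mathfrak{sl}_2)\cdot v = U^-\cdot v$, which is spanned by $\{v, f\cdot v, f^2\cdot v,\dots\}$; since $V$ is finite dimensional this span $W$ is a finite dimensional nonzero submodule. Irreducibility of $V$ then forces $W=V$. So $V$ is a highest weight module with highest weight $\lambda\in P^+$ (the maximality ensures dominance, via the standard $\mathfrak{sl}_2$ string argument applied to the $e,f$-action on $v$: if $\lambda$ were not dominant one produces a larger weight, a contradiction).

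The main obstacle — really the only nontrivial point — is verifying that a highest weight vector actually exists inside $V$ as a module over the \emph{affine} algebra in the precise sense demanded later (i.e. simultaneously an eigenvector for all $\omega(m),\omega'(-m)$ and killed by all $x^+(k)$), rather than merely over $U_{r,s}(\mathfrak{sl}_2)$. Here one uses that the imaginary root vectors $a(\ell)$ lie in the commutative Heisenberg subalgebra $\mathcal H_{r,s}$, and that on the top weight space — or on the joint generalized eigenspace obtained by triangularizing the commuting family $\{\omega(m)\}$ — one can further diagonalize; a standard highest-weight-theory argument then produces a common eigenvector annihilated by every $x^+(k)$. For the finite type statement as literally written, though, this subtlety does not arise: the proof is just the weight-space maximality argument above, and the constraint that the highest weight is dominant ($\lambda(i)\geq 0$, matching $\mathcal P^\lambda$) is read off from the explicit $\mathfrak{sl}_2$-action $e\cdot v_i=[n+1-i]v_{i-1}$, $f\cdot v_i=[i+1]v_{i+1}$ recalled just before the statement, which shows the irreducible quotient has the asserted basis and weights.
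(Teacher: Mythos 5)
Your argument is correct and is precisely the standard maximal-weight argument that the paper has in mind: the paper states this proposition without any written proof, implicitly deferring (as it does throughout Section 3) to the classical one-parameter argument of Chari--Pressley and to \cite{BW2, BGH2}, which is exactly what you have written out. Your closing remark correctly identifies the only genuine subtlety (the affine version, requiring a common eigenvector for the commuting family $\omega(m)$, $\omega'(-m)$ annihilated by all $x^+(k)$), and correctly observes that for the finite-type statement as literally written it does not arise.
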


\medskip

\begin{theo}
(1)\, Let $V$ be a finite dimensional highest weight representation
of $U_{r,s}({\mathfrak{sl}_2})$, and as an evaluation module for
 $U_{r,s}({\widehat{\mathfrak{sl}}_2})$ there exists a polynomial $P(z)\in \mathbb{C}[z]$ such that
$P(0)\neq 0$ and
$$\sum\limits_{k=0}^{\infty}\Phi_k^+z^k=r^{degP}\frac{P(sz)}{P(rz)},$$
$$\sum\limits_{k=0}^{\infty}\Phi_{-k}^-z^{-k}=r^{degP}\frac{Q(sz)}{Q(rz)},$$
where $Q(z)=P((rs)^{degP}z)$ .

(2)\,For any series of complex number
$\underline{\Phi}=(\Phi_k^+,\,\Phi_{-k}^-)_{k\in \mathbb{N}}$ such
that $\Phi_o^+\Phi_0^-=(rs)^n$ for some integer $n$, there exists a
finite dimensional irreducible highest weight module
$V(\underline{\Phi})$.
\end{theo}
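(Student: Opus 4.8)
**Proof proposal for Theorem (the final statement, on $U_{r,s}(\mathfrak{sl}_2)$ evaluation modules and their Drinfeld polynomials).**

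The plan is to compute the eigenvalues $\Phi_k^{\pm}$ explicitly on the evaluation module $V_n(a)$ using the generating series definition of the $\om(m)$, $\om'(-m)$ in terms of the $a(\ell)$, and then to match them against a product of linear factors. First I would use Proposition \ref{EV1} (or the explicit basis action in the proposition preceding this theorem): the highest weight vector is $v_0$, and by (D7) applied to $v_0$ one has $[x^+(k),x^-(k')]\cdot v_0 = \tfrac1{r-s}(\om(k+k') - \om'(k+k'))\cdot v_0$ while the left-hand side can be evaluated directly since $x^-(k')\cdot v_0 = a^{k'}r^{nk'}(rs^{-1})^{-k'}[1]\,v_1$ and $x^+(k)\cdot v_1 = a^k s^{-nk}(rs^{-1})^{-k}[n]\,v_0$. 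This yields a closed formula for $(\om(m)-\om'(m))\cdot v_0$ for $m\ge 0$; combined with the known values $\om(0)\cdot v_0 = \om\cdot v_0 = r^n v_0$ and $\om'(0)\cdot v_0 = s^n v_0$, and the fact that $\om(m)\cdot v_0 = 0$ resp. $\om'(-m)\cdot v_0 = 0$ for the "wrong-sign" $m$, one reads off each $\Phi_m^+$ and $\Phi_{-m}^-$ individually.

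Next I would package these scalars into the generating functions $\sum_{m\ge 0}\Phi_m^+ z^m$ and $\sum_{m\ge 0}\Phi_{-m}^- z^{-m}$ and recognize them as geometric-type series. The computation should give $\sum_{k\ge0}\Phi_k^+ z^k = r^n\,\frac{1 - a s^{-?}\cdots sz}{1 - a\cdots rz}$-type expression; to get the full statement for degree-$n$ $P$ rather than just $n=1$, I would invoke Theorem \ref{T:tensor}: a general finite-dimensional highest weight module sits inside a tensor product of modules $V_{w_1}(a_j)$ with $P(z)=\prod_j(1-a_j z)$, and since $\Delta(\om(m))$ acts diagonally on the tensor product of highest weight vectors (as already used in the proof of Theorem \ref{T:tensor}), the generating function on the tensor product is the product of the generating functions on the factors. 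Multiplying the $n=1$ formulas then gives exactly $r^{\deg P}P(sz)/P(rz)$ for the $+$ part, and the shift $Q(z)=P((rs)^{\deg P}z)$ for the $-$ part comes from the $\infty$-expansion: the $\om'(-m)$ series is governed by the "lowest weight" end $v_n$, where the $\om'$-eigenvalue is $s^n(rs^{-1})^{n}=(rs)^{?}$, producing precisely the $(rs)^{\deg P}$-dilation. For part (2), given $\underline{\Phi}$ with $\Phi_0^+\Phi_0^- = (rs)^n$, I would run the correspondence backwards: the constraint forces $n = \deg P$ for a unique monic-normalized $P$ with $P(0)\ne 0$ reconstructed from the partial fraction / continued-product expansion of $\sum\Phi_k^+ z^k$, and then $V(\underline{\Phi})$ is realized as the irreducible quotient of the corresponding tensor product of evaluation modules (finite-dimensionality being guaranteed by Corollary to Theorem \ref{T:tensor}), with irreducibility and the highest-weight property following from Proposition 3.1(2).

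The main obstacle I expect is \emph{bookkeeping of the two-parameter powers of $rs$}: unlike the one-parameter case where $r=s^{-1}=q$, here the normalization of $\om(m)$ versus $\om'(-m)$ involves $\gamma,\gamma'$ and the factors $(r_is_i)^{\pm \ell a_{ij}/2}$ from (D2), (D5$_1$)–(D5$_2$), so keeping the exponent of $(rs)$ in $Q(z)=P((rs)^{\deg P}z)$ consistent across the $+$ and $-$ halves requires care — in particular verifying that the two Laurent expansions (at $0$ and at $\infty$) of the appropriately dilated rational function genuinely produce the same family, and that the apparent asymmetry between $r^{\deg P}$ appearing in both formulas (rather than $r^{\deg P}$ and $s^{\deg P}$) is correct. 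I would settle this by checking the $n=1$ case by hand against the explicit action and then propagating multiplicatively; once $n=1$ is nailed down, the rest is formal.
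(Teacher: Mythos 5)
The paper does not actually supply a proof of this theorem: it is stated bare, and the only supporting material is the proposition that follows it, which verifies the formula on the evaluation modules $V_n(a)$ by computing the $\Phi_k^{\pm}$ directly from the action of $x^{\pm}(k)$ on the weight basis and resumming the resulting geometric series into a single rational function whose numerator and denominator factor as $P(sz)$ and $P(rz)$ for the $(r^{-1}s)$-string polynomial $P(z)=\prod_{k=1}^{n}\bigl(1-a(r^{-1}s)^{k}r^{-1}s^{-n}z\bigr)$. Your first step --- extracting $\Phi_m^{\pm}$ from $(\mathrm{D}7)$ applied to $v_0$ and packaging them into generating series --- is exactly that computation, so on evaluation modules you and the paper agree. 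Where you genuinely diverge is in handling a general $P$: the paper computes on $V_n(a)$ for arbitrary $n$ in one stroke (and hence only produces polynomials whose roots form a single $(r^{-1}s)$-string), whereas you do only the degree-one case by hand and then invoke Theorem \ref{T:tensor} to multiply generating functions across tensor products of fundamental modules. Your route is the Chari--Pressley one and in fact reaches more than the paper's verification does (arbitrary products of linear factors), which is what part (1) in full generality requires; it is the better-structured argument. Your consistency worry about $r^{\deg P}$ appearing in front of both formulas resolves correctly: expanding $r^{\deg P}Q(sz)/Q(rz)$ about $z=\infty$ gives constant term $r^{\deg P}(s/r)^{\deg Q}=s^{\deg P}$, matching $\om'\cdot v_0=s^{n}v_0$, so no $s^{\deg P}$ prefactor is needed.

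For part (2) there is a genuine gap, though it originates in the theorem's own wording. You propose to reconstruct $P$ ``from the partial fraction expansion of $\sum_k\Phi_k^+z^k$,'' but the stated hypothesis $\Phi_0^+\Phi_0^-=(rs)^n$ constrains only the degree-zero terms and says nothing about the higher coefficients; for a generic series $\underline{\Phi}$ the generating function is not a rational function of the form $r^{\deg P}P(sz)/P(rz)$, and the irreducible highest weight module is infinite dimensional. So the reconstruction step has nothing to act on. A correct version of part (2) must additionally assume that $\underline{\Phi}$ is of the form prescribed in part (1); under that hypothesis your construction --- take $P(z)=\prod_j(1-a_jz)$, form the tensor product of the fundamental modules $V_{w_1}(a_j)$, and pass to the irreducible quotient of the submodule generated by the product of highest weight vectors, using the Corollary to Theorem \ref{T:tensor} for finite-dimensionality --- does work. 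State that restriction explicitly rather than claiming to recover $P$ from the bare normalization condition on $\Phi_0^{\pm}$.
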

The polynomial $P(z)$ in the theorem is called Drinfel'd polynomial.
And the following proposition is an example the above theorem.

\begin{prop}
For the evaluation representation $V_n(a)$ of
$U_{r,s}({\widehat{\mathfrak{sl}}_2})$, we obtain the Drinfel'd
polynomial as follows.
$$P(z)=\sum\limits_{k=1}^{n}(1-ar^{-k-1}s^{k-n}z).$$
\end{prop}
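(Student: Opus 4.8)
The plan is to evaluate the Drinfeld Cartan generators $\om(m),\om'(-m)$ on the cyclic vector $v_0$ of $V_n(a)$, read off the generating function of their eigenvalues, and then recognize it via a telescoping identity. Since $\gamma$ and $\gamma'$ act as $1$ on $V_n(a)$ and, as observed above, $x^+(k)\cdot v_0=0$ for all $k$, $\om_0\cdot v_0=r^nv_0$ and $\om'_0\cdot v_0=s^nv_0$, I would first specialize relation $(\textrm{D7})$ with $k'=0$. Using the built-in vanishing $\om'(k)=0$ for $k>0$ and $\om(k)=0$ for $k<0$, this gives, for $k\ge 1$,
$$\om(k)\cdot v_0=(r-s)\,x^+(k)\,x^-(0)\cdot v_0,\qquad \om'(-k)\cdot v_0=-(r-s)\,x^+(-k)\,x^-(0)\cdot v_0 .$$
Thus the entire eigenvalue computation collapses to the single two-step chain: apply $x^-(0)$ to $v_0$, then $x^+(\pm k)$. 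From the explicit action of the Drinfeld generators on $V_n(a)$ recorded above, $x^-(0)\cdot v_0=v_1$ and $x^+(k)\cdot v_1=a^k s^{-nk}(rs^{-1})^{-k}[n]\,v_0$, so this chain is immediate.

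Carrying this out, and using $(r-s)[n]=r^n-s^n$, I would obtain $\Phi^+_0=r^n$ and $\Phi^+_k=(r^n-s^n)c^{\,k}$ for $k\ge1$, together with $\Phi^-_0=s^n$ and $\Phi^-_{-k}=-(r^n-s^n)c^{-k}$ for $k\ge1$, where $c=a\,r^{-1}s^{\,1-n}$. Summing the two geometric series then yields
$$\sum_{m\ge0}\Phi^+_m z^m=r^n+(r^n-s^n)\frac{cz}{1-cz}=\frac{r^n-s^n cz}{1-cz},$$
and the Laurent expansion of this same rational function about $z=\infty$ reproduces $\sum_{m\ge0}\Phi^-_{-m}z^{-m}$, so $V_n(a)$ fits the framework of the preceding Theorem (note $\Phi^+_0\Phi^-_0=(rs)^n$).

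To finish, I would identify the Drinfeld polynomial $P(z)=\prod_{k=1}^n\bigl(1-a\,r^{-k-1}s^{\,k-n}z\bigr)$. Writing $c_k=a\,r^{-k-1}s^{\,k-n}$, the key point is the chain relation $s\,c_k=r\,c_{k+1}$ for $1\le k\le n-1$, which makes the rational function $\prod_{k=1}^n\frac{1-c_k s z}{1-c_k r z}$ telescope down to $\frac{1-c_n s z}{1-c_1 r z}$; since $c_1 r=c$ and $c_n s=(s/r)^n c$, this equals $\frac{1-(s/r)^n cz}{1-cz}$, and multiplying by $r^{n}=r^{\deg P}$ recovers exactly $\frac{r^n-s^n cz}{1-cz}$. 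As $P(0)=1\ne0$ and $\deg P=n$ agrees with the $\mathfrak{sl}_2$-highest weight of $V_n(a)$, this $P$ is the asserted Drinfeld polynomial.

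I expect the only genuinely delicate step to be the reduction in the first paragraph: one must verify that $\om(m)\cdot v_0$ and $\om'(-m)\cdot v_0$ are \emph{completely} determined by the single commutator $[x^+(m),x^-(0)]$ acting on $v_0$, i.e.\ that $v_0$ is a Drinfeld highest weight vector in the strong sense and that the triangular decomposition leaves no other contribution. This relies on both $x^+(k)\cdot v_0=0$ and the vanishing $\om'(m)=0$, $\om(-m)=0$ for $m>0$. Once that is in place, the remaining work is exponent bookkeeping and the one telescoping identity.
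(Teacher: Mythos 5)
Your computation of the positive series follows the paper's own route: you read off $\Phi^+_k=(r^n-s^n)c^k$ with $c=ar^{-1}s^{1-n}$, sum the geometric series to $(r^n-s^ncz)/(1-cz)$, and telescope $r^nP(sz)/P(rz)$ with $P(z)=\prod_{k=1}^n(1-ar^{-k-1}s^{k-n}z)$ (the $\sum$ in the statement is a typo for $\prod$, as you correctly assumed). That half is correct, and your derivation of the eigenvalues from $(\textrm{D7})$ with $k'=0$ is a useful supplement to the paper, which merely asserts them. The genuine gap is in your negative series. You claim $\sum_{m\ge0}\Phi^-_{-m}z^{-m}$ is the Laurent expansion at $z=\infty$ of the \emph{same} rational function, which forces $\Phi^-_{-k}=-(r^n-s^n)(a^{-1}rs^{n-1})^k$. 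The paper instead has $\Phi^-_{-k}=-(r^n-s^n)(a^{-1}r^{1-n}s^{-1})^k$, differing from yours by $(rs)^{nk}$, and correspondingly its Theorem 4.7 (and Theorem 3.3 with $\epsilon=-1$) states that the negative series equals $r^{\deg P}\,Q(sz)/Q(rz)$ for the \emph{different} polynomial $Q(z)=P((rs)^{\deg P}z)$. This pair $(P,Q)$ with $Q\neq P$ is precisely the ``new feature'' of the two-parameter case advertised in the introduction; your argument silently collapses it back to the one-parameter picture, where $rs=1$ and $Q=P$.

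The source of the discrepancy is your reduction $\om'(-k)\cdot v_0=-(r-s)\,x^^{}+(-k)x^-(0)\cdot v_0$. Relation $(\textrm{D7})$ at $\gamma=\gamma'=1$ equally yields $\om'(-k)\cdot v_0=-(r-s)\,x^+(0)x^-(-k)\cdot v_0$, and evaluating \emph{that} with the module formulas of Proposition 4.4 gives the paper's exponents $(a^{-1}r^{1-n}s^{-1})^k$, not yours: the two specializations of $(\textrm{D7})$ disagree by $(rs)^{nk}$ on this module, so one cannot simply pick whichever is convenient. (This is a signal that the central elements $\gamma,\gamma'$ with $\gamma\gamma'=(rs)^c$ cannot be discarded as casually as in the one-parameter case, and that the $(rs)^{\pm}$ dressing in the negative modes is forced.) Your observation that $\Phi^+_0\Phi^-_0=(rs)^n$ holds either way and does not justify replacing $Q$ by $P$. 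As written, your proposal proves the positive half of the proposition but contradicts the paper on the negative half; to align with Theorem 4.7 you must carry out the negative-mode computation with the normalization that produces $Q(z)=P((rs)^nz)$.
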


\begin{proof}\, We first have
$$\Phi_k^+=(r-s)(ar^{-1}s^{1-n})^k[n]$$ and
$$\Phi_{-k}^-=-(r-s)(a^{-1}r^{1-n}s^{-1})^k[n],$$ where
$[n]=\frac{r^n-s^n}{r-s}$.

It is easy to see that
\begin{eqnarray*}
\sum\limits_{k=0}^{\infty}\Phi_k^+z^k&=&r^{n}+\sum\limits_{k=1}^{n}(r^n-s^n)\frac{ar^{-1}s^{1-n}z}{1-ar^{-1}s^{1-n}z}\\
 &=&r^n\frac{1-a(r^{-1}s)r^{-n} z}{1-a(r^{-1}s)s^{-n}z}\\
 &=&r^n(\frac{1-a(r^{-1}s)^2s^{-n} z}{1-a(r^{-1}s)s^{-n}z})(\frac{1-a(r^{-1}s)^3s^{-n}
 z}{1-a(r^{-1}s)^2s^{-n}z})\cdots (\frac{1-a(r^{-1}s)^{n+1}s^{-n}
 z}{1-a(r^{-1}s)^ns^{-n}z})\\
 &=&r^{n}\frac{P(sz)}{P(rz)}
\end{eqnarray*}
where $P(z)=\sum\limits_{k=1}^{n}(1-a(r^{-1}s)^kr^{-1}s^{-n}z).$

Similarly, we have
$$\sum\limits_{k=0}^{\infty}\Phi_{-k}^-z^{-k}=r^{degP}\frac{Q(sz)}{Q(rz)},$$
where
$Q(z)=\sum\limits_{k=1}^{n}(1-a(r^{-1}s)^kr^{n-1}z)=P((rs)^nz).$

\end{proof}
The following proposition gives the Drinfeld polynomials for all
evaluation representations of two-parameter quantum affine algebra
$\mathcal{U}_{r,s}(\widehat{\mathfrak{sl}_2})$.

\begin{prop} For the evaluation representation
$W_n(a)=V_n(rs^{-1}a)$ of two-parameter quantum affine algebra
$\mathcal{U}_{r,s}(\widehat{\mathfrak{sl}_2})$, we can obtain the
eigenvalues $\Phi_{k,j}$ in the above proposition 3.6. as follows:
$$\sum\limits_{m>0}\Phi_{i,\pm m}^{\pm}u^{\pm m}=r^{deg R_i-\frac{deg Q_i}{2}}s^{\frac{deg Q_i}{2}}\frac{R_i(us)Q_i(ur)}{R_i(ur)Q_i(us)}$$
where
$$Q_i(z)=\prod\limits_{j=1}^i(1-ar^{-j}s^{j-n-1}u)(1-ar^{1-j}s^{j-n-2}u),$$
$$R_i(z)=\prod\limits_{j=1}^n(1-ar^{-j}s^{j-n-1}u).$$
\end{prop}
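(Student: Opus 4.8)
The plan is to compute the eigenvalues $\Phi^{\pm}_{i,\pm m}$ of the Cartan-type operators $\om_i(m)$, $\om'_i(-m)$ on the highest weight vector $v_0$ of the evaluation module $W_n(a)=V_n(rs^{-1}a)$ directly from the explicit action given in Proposition 4.5, and then recognize the resulting generating series as the rational expression in the statement. First I would record the action of $\om_i(m)$ on $v_0$: by the defining relation (D7), for $k+k'=m\geq 0$ one has
$$[x^+_i(k),x^-_i(k')] = \tfrac{1}{r_i-s_i}\bigl(\gamma'^{-k}\gamma^{-\frac{k+k'}{2}}\om_i(k+k') - \gamma^{k'}\gamma'^{\frac{k+k'}{2}}\om'_i(k+k')\bigr),$$
so applying this to $v_0$ and using $\gamma=\gamma'=1$ together with $x^+_i(k)v_0 = 0$, $x^+_i(k)v_1$ and $x^-_i(k')v_0$ computed from Proposition 4.5, one extracts $\om_i(m)v_0$ (for $m>0$) and $\om'_i(-m)v_0$ (for $m>0$), while $\om_i(0)v_0 = r^n v_0$, $\om'_i(0)v_0 = s^n v_0$ come from the $U_{r,s}(\mathfrak{sl}_2)$-action. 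This is the same bookkeeping already carried out in the $\mathfrak{sl}_2$ case in the proof of the previous proposition, only with the shift $a \mapsto rs^{-1}a$; I expect $\Phi^+_k$ and $\Phi^-_{-k}$ to come out as geometric-type sums analogous to $(r-s)(ar^{-1}s^{1-n})^k[n]$.

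Next I would form the generating functions $\gamma_i^{\pm}(u) = \sum_{m>0}\Phi^\pm_{i,\pm m}u^{\pm m}$ and sum the geometric series. The key algebraic identity is the telescoping factorization already used above, namely that a single Möbius factor $\frac{1-a\alpha z}{1-a\beta z}$ with $\beta/\alpha = rs^{-1}$ can be written as a product of $n$ consecutive Möbius factors whose numerators and denominators telescope; iterating this packages $\sum_k \Phi^+_k z^k$ as $r_i^{\deg R_i - \frac{\deg Q_i}{2}} s_i^{\frac{\deg Q_i}{2}} \frac{R_i(us)Q_i(ur)}{R_i(ur)Q_i(us)}$ with $R_i$ and $Q_i$ as displayed. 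I would then check that the claimed $Q_i$, with its two families of factors $(1-ar^{-j}s^{j-n-1}u)$ and $(1-ar^{1-j}s^{j-n-2}u)$, is exactly what the shift $a\mapsto rs^{-1}a$ produces relative to the single-family Drinfeld polynomial $P$ of the previous proposition: the two families correspond to the two natural "halves" coming from the $r$- and $s$-scalings in the definition of $\om_i(m)$ versus $\om'_i(-m)$, which is precisely the pair-of-polynomials phenomenon emphasized in the introduction. Finally one verifies the negative part $\sum_{m>0}\Phi^-_{-m}u^{-m}$ expands the same rational function about $\infty$, which is automatic once the rational function is identified, since $\om_i(m)$ and $\om'_i(-m)$ are the two expansions of a single series in $\mathcal{U}^0_{r,s}$.

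The main obstacle will be the combinatorial identification of the factors of $Q_i$ and $R_i$, i.e. getting the exponents of $r$ and $s$ in each Möbius factor exactly right after the substitution $a\mapsto rs^{-1}a$ and the reindexing; a sign or an off-by-one in the powers $j-n-1$, $j-n-2$ would break the telescoping. I would handle this by first doing the computation for small $n$ (say $n=1,2$) to fix the pattern, then proving the general telescoping by induction on the number of factors, tracking the numerator of the $j$th factor against the denominator of the $(j+1)$st. The prefactor $r_i^{\deg R_i - \frac{\deg Q_i}{2}} s_i^{\frac{\deg Q_i}{2}}$ should then fall out as the value of the rational function at $u=0$, matched against $\Phi^+_0 = r^n$; the only care needed is that $\deg Q_i = 2i$ while $\deg R_i = n$, so the exponent bookkeeping is consistent with Proposition 3.6 as stated.
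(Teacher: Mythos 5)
There is a genuine gap: you have misidentified which eigenvalues the proposition is computing. The index $i$ in $\Phi_{i,\pm m}^{\pm}$, $Q_i$, $R_i$ here does not label a Dynkin node (for $\widehat{\mathfrak{sl}}_2$ there is only one node), and the eigenvalues are not those on the highest weight vector alone. The proposition records, in the sense of Proposition 3.6 (the Frenkel--Reshetikhin-type decomposition $V=\bigoplus V_{\gamma_{i,m}^{\pm}}$), the eigenvalue of $\omega(k)$ and $\omega'(-k)$ on \emph{each} weight vector $v_i$, $i=0,1,\dots,n$, of $W_n(a)$. Your plan of applying (D7) to $v_0$ only reproduces the highest-weight data, i.e.\ the case $i=0$, where $Q_0=1$ and the prefactor is $r^n$ --- which is exactly the content of the preceding proposition on the Drinfeld polynomial $P$, not of this one. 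A quick sanity check that signals the problem: the stated prefactor $r^{\deg R_i-\frac{\deg Q_i}{2}}s^{\frac{\deg Q_i}{2}}=r^{n-i}s^{i}$ varies with $i$ and agrees with $\omega\cdot v_i=r^{n-i}s^{i}v_i$, so it cannot be the constant term of a series of eigenvalues on $v_0$ unless $i=0$; likewise $\deg Q_i=2i$ growing with $i$ has no meaning if $i$ is a node label for $\mathfrak{sl}_2$.

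Consequently your explanation of the two families of factors in $Q_i$ is not the right mechanism either. They do not arise from the shift $a\mapsto rs^{-1}a$, nor from an $\omega$-versus-$\omega'$ dichotomy; they arise because on an interior vector $v_i$ the commutator $[x^+(k),x^-(0)]v_i$ has \emph{two} nonvanishing contributions: $x^+(k)x^-(0)v_i$ (through $v_{i+1}$, carrying $[i+1][n-i]$ and the geometric ratio $ar^{-i}s^{i-n}u$) and $x^-(0)x^+(k)v_i$ (through $v_{i-1}$, carrying $[n+1-i][i]$ and the ratio $ar^{1-i}s^{i-n-1}u$). Summing the two resulting geometric series gives $\sum_{k}\Phi_{i,k}^+u^k=r^{n-i}s^{i}\,\frac{(1-ars^{-n-1}u)(1-ar^{-n}u)}{(1-ar^{-i}s^{i-n}u)(1-ar^{1-i}s^{i-n-1}u)}$, and only at this point does your telescoping idea enter, to rewrite this two-pole rational function as $\frac{R_i(us)Q_i(ur)}{R_i(ur)Q_i(us)}$ with the stated $Q_i$ and $R_i$. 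On $v_0$ the second contribution vanishes and the structure you are asked to verify disappears. To repair the argument, redo the computation with $[x^+(k),x^-(0)]$ acting on a general $v_i$, using the explicit formulas for $x^{\pm}(k)\cdot v_i$ on the evaluation module, and treat $i$ as the weight-space label throughout.
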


\begin{proof}\, It is easy to see that $W_n(a)$ admits a
linear space decomposed as ${U}_{r,s}({\mathfrak{sl}_2})-$module as
follows:
$$W_n(a)=\mathbb{C}v_0\oplus \mathbb{C}v_1 \oplus\cdots \oplus\mathbb{C}v_n.$$

It follows from proposition 4.4 that
$$x^+(k)\cdot v_i=a^ks^{-nk}(rs^{-1})^{-ki}[n+1-i]\, v_{i-1},$$
$$x^-(k)\cdot v_i=a^kr^{nk}(rs^{-1})^{-k(i+1)}[i+1]\, v_{i+1}.$$
So by direct calculation, we get,
\begin{eqnarray*}
&& \omega(k)\cdot
v_i\\\
&=&(r-s)a^ks^{-nk}(rs^{-1})^{-ki}(rs^{-1})^k\big((rs^{-1})^{-k}[i+1][n-i]-[n+1-i][i]\big)\cdot
v_i
\end{eqnarray*}
Then we obtain
\begin{eqnarray*}
&&\sum\limits_{k=0}^{\infty}\Phi_{i,k}^+ u^k\\
&=&r^n(rs^{-1})^{-i}+
\sum\limits_{k=1}^{\infty}(r-s)a^ks^{-nk}(rs^{-1})^{-ki}(rs^{-1})^k\\
&&\Big((rs^{-1})^{-k}[i+1][n-i]-[n+1-i][i]\Big)\\
&=&r^n(rs^{-1})^{-i}+(r-s)\Big(\frac{ar^{-i}s^{i-n}u}{1-ar^{-i}s^{i-n}u}[i+1][n-i]\\
&&\hskip3.5cm-\frac{ar^{1-i}s^{i-n-1}u}{1-ar^{1-i}s^{i-n-1}u}[n+1-i][i]\Big)\\
&=&r^{n-i}s^{i}\,\frac{(1-ars^{-n-1}u)(1-ar^{-n}u)}{(1-ar^{-i}s^{i-n}u)(1-ar^{1-i}s^{i-n-1}u)}\\
&=&r^{n-i}s^{i}\,\frac{R_i(us)Q_i(ur)}{R_i(ur)Q_i(us)}
\end{eqnarray*}
where$$Q_i(z)=\prod\limits_{j=1}^i(1-ar^{-j}s^{j-n-1}u)(1-ar^{1-j}s^{j-n-2}u),$$
$$R_i(z)=\prod\limits_{j=1}^n(1-ar^{-j}s^{j-n-1}u).$$
The another relation is similar. Thus we have completed the proof.
\end{proof}

\section{Specializations}

\noindent {\bf 5.1}\,
By our previous analysis of Drinfeld polynomials,
the theory of finite dimensional representations of $U_{r, s}(\widehat{\mathfrak{sl}_2})$
is quite similar to the classical case in the generic case when the parameters $r$ and
$s$ are independent.
When we consider specializations of the two parameters $r$ and
$s$, there are some special phenomena.

(I) If we specialize  $s$ to $r^{-1}$, then the quantum Cartan
matrix of $U_{r,s}(\widehat{\mathfrak{sl}_2})$ becomes
$$\left(\begin{array}{cc}
r^2& r^{-2} \\
r^{-2} & r^{2}
\end{array}\right),$$ which is the same to that of the classical case.

(II) If we specialize  $s$ to $r$, then we have $[n]=nr^{n-1}$, and
the quantum Cartan matrix of $U_{r,s}(\widehat{\mathfrak{sl}_2})$
becomes
$$\left(\begin{array}{cc}
1& 1 \\
1 & 1
\end{array}\right),$$ which implies the group-like elements $\omega$
and $\omega'$ are in the center of $U_{r,s}(\widehat{\mathfrak{sl}_2})$.
On the other hand, from the actions of generators $\omega, \omega'$:
$$\omega\cdot v_i=r^n v_{i};\quad \omega'\cdot v_i=r^n v_{i};\quad\omega_o\cdot v_i=r^n v_{i};\quad \omega'_0\cdot v_i=r^n v_{i},$$
they have the same eigenvalue $r^n$, while the finite dimensional
representation $V_n$ is still irreducible.

(III) If we specialize $r$ to $s^k$, where $k\in \mathbb{Z}/\{1\}$,
or $r$ and $s$ are independent,  then we just let $q^2=rs^{-1}$,
quantum Cartan matrix of $U_{r,s}(\widehat{\mathfrak{sl}_2})$ becomes
$$\left(\begin{array}{cc}
q^2& q^{-2} \\
q^{-2} & q^{2}
\end{array}\right).$$ Thus the finite dimensional representation theory
is similar to that of one-parameter case.

 \vskip30pt \centerline{\bf ACKNOWLEDGMENT}

\bigskip
Jing thanks the support of
Simons Foundation grant 198129, NSFC grant 11271138 and NSF grants
1014554 and 1137837.  H. Zhang would
like to thank the support of NSFC grant 11371238 and 11101258.

\bigskip

\bibliographystyle{amsalpha}

\end{document}